\numberwithin{equation}{section}
\theoremstyle{plain}
\DeclareMathAlphabet{\pazocal}{OMS}{zplm}{m}{n}
\newtheorem{theorem}{Theorem} [section]
\newtheorem{lemma}{Lemma}[section]
\newtheorem{proposition}{Proposition}[section]
\newtheorem{cor}{Corollary}[section]
\theoremstyle{remark}
\newtheorem{remark}{Remark}[section]
\renewcommand{\bar}{\overline}
\title[ eigenvalue estimate for self-shrinkers ]{An eigenvalue estimate for  self-shrinkers in a Ricci Shirinker}
\thanks{F. Conrado was partially supported by CNPq/Brazil [Grant: 408834/2023-4].}
\thanks{D. Zhou was partially supported by CNPq/Brazil [Grant:  308067/2023-1] and FAPERJ/ Brazil [Grant: E-26/200.386/2023].}
\address{Departamento de Matem\'atica, Universidade Federal de Sergipe, Jardim Rosa Elze, S\~ao Crist\'ov\~ao, SE  49100-000, Brazil}
\author[Franciele Conrado]{Franciele Conrado}
\email{franciele@mat.ufs.br}
\author[Detang Zhou]{Detang Zhou}
\address{Departamento de Geometria, Instituto de Matem\'atica e Estat\'istica, Universidade Federal Fluminense, S\~ao Domingos,
Niter\'oi, RJ 24210-201, Brazil}
\email{zhoud@id.uff.br}
\begin{document}
\maketitle

\begin{abstract}
In this paper, we study the drifted Laplacian $\Delta_f$ on a hypersurface $M$ in a Ricci shrinker $(\overline{M},g,f)$.  We prove that the spectrum of $\Delta_f$ is discrete for immersed hypersurfaces with bounded weighted mean curvature in a Ricci shrinker with a mild condition on the potential function. Next, we give a lower bound for the first nonzero eigenvalue of $\Delta_f$ when the  hypersurface is  an embedded $f$-minimal one.  This estimate contains the case of compact minimal hypersurfaces in a positive Einstein manifold, in particular Choi and Wang's estimate for minimal hypersurfaces in a round sphere. The estimate also recovers the ones of Ding-Xin and Brendle-Tsiamis on self-shrinkers. 
\end{abstract}

\section{Introduction}

A \textit{complete smooth metric measure space} $(N, g, f)$ is a complete Riemannian manifold $(N,g)$ accompanied by a smooth function $f$ defined on $N$. The \textit{drifted Laplacian} of a complete smooth metric measure space $(N,g,f)$ is defined as:
$$\Delta_f=\Delta-\langle \nabla f, \nabla \cdot \rangle$$
\noindent where $\Delta$ and $\nabla$ are the Laplacian and gradient on $(N,g)$, respectively. The second-order operator $\Delta_f$ is a self-adjoint operator on the space of square integrable function on $N$ with respect to the measure induced by the weighted volume element $e^{-f}dv$.
The \textit{Bakry-\'Emery Ricci curvature} of a complete smooth metric measure space $(N,g,f)$ is defined as:
$$\textrm{Ric}_f=\textrm{Ric}+\nabla^2f$$
\noindent where $\nabla^2 f$ represents the Hessian of $f$ and $\textrm{Ric}$ represents the Ricci curvature of $(N,g)$. Recall that a complete smooth metric measure space $(N,g,f)$ is  a \textit{shrinking gradient Ricci soliton} or simply a \textit{Ricci shrinker} if:
\begin{equation}\label{RS0}\textrm{Ric}_f=k g\end{equation}
\noindent for some positive constant $k$. In this case, the function $f$ is called a \textit{potential function} of the Ricci shrinker. By scaling $g$, one can normalize $k=\frac{1}{2}$ so that
\begin{equation}\label{RS}
\textrm{Ric}_f=\frac{1}{2} g.
\end{equation}

Note that any Einstein manifold with positive scalar curvature accompanied by a constant function is a Ricci shrinker. An important example of a Ricci shrinker is the \textit{Gaussian shrinker} $(\mathbb{R}^n, g_{can}, \frac{|x|^2}{4})$, where $g_{can}$ represents the Euclidean metric, and the potential function is given by $f(x) = \frac{|x|^2}{4}$. Another example of a Ricci shrinker is the \textit{cylinder Ricci shrinker} $(\mathbb{S}_R^{n-k} \times \mathbb{R}^k, g_c, f_c)$ with the product metric $g_c$ and the potential function $f_c(x,y) = \frac{|y|^2}{4}$, where $(x,y) \in \mathbb{S}_R^{n-k}\times \mathbb{R}^k$. Here, $\mathbb{S}_R^{n-k}$ denotes the $(n-k)$-dimensional round sphere of radius $R= \sqrt{2(n-k-1)}$, with $k \geq 1$ and $n-k \geq 2$. We refer to Cao's paper \cite{Cao2010} for the background and \cite{CZ2023, MW2015, MW2017} for some recent results.

Since Ricci shrinkers lie at the intersection of critical metrics and geometric flows, and have been extensively studied, it is natural and important to study submanifolds in Ricci shrinkers and, more generally, in complete smooth metric measure space. Let $(\overline{M},g,f)$ be a complete smooth metric measure space and $M$ be a submanifold in $\overline{M}$. The function $f$ restricted to $M$ is also denoted by $f$. The {\it weighted mean curvature vector} $\vec{H}_f$ of $M$ is defined by
 $$\vec{H}_f=\vec{H}+(\overline{\nabla} f)^{\perp}$$
\noindent where $\vec{H}$ is the mean curvature vector of $M$, $\overline{\nabla} f$ denotes the gradient of $f$ on $\overline{M}$ and $\perp$ denotes the projection onto the normal bundle of $M$. In the case of hypersurfaces, the weighted mean curvature $H_f$ of $M$ is defined as $\vec{H}_f=-H_f\eta$, where $\eta$ is the unit normal field on $M$. The submanifold $M$ is called {\it $f$-minimal} if its weighted mean curvature vector $\vec{H}_f$ vanishes identically. Note that when $f$ is a constant function, an $f$-minimal submanifold is just a minimal submanifold. Important and interesting examples of $f$-minimal submanifolds are self-shrinkers, self-expanders, and translating solitons for mean curvature flows in Euclidean spaces with different potential functions. Recall that a self-shrinker is a submanifold of $\mathbb{R}^{m}$ satisfying
$$\vec{H}+\frac{1}{2}x^{\perp}=0$$
\noindent where $x$ is the position vector in $\mathbb{R}^{m}$. Hence, a self-shrinker is an $f$-minimal submanifold in the Gaussian shrinker $(\mathbb{R}^{m},g_{can},f)$ with $f(x)=\frac{|x|^2}{4}$. In this case, it is usual to denote the drifted Laplacian $\Delta_f$ of $M$  by $\mathcal{L}$. Self-shrinkers play a key role in the study of type I singularities of the mean curvature flow. For more information on self-shrinkers, see \cite{CM} and its references.

In this paper, we are interested in studying the discreteness of the spectrum and lower estimates for the first eigenvalue of the drifted Laplacian $\Delta_f$ of an embedded $f$-minimal hypersurface in a complete smooth metric measure space $(\overline{M},g,f)$.

Choi and Wang \cite{CW1983} proved a lower bound for the first nonzero eigenvalue of the Laplacian on a closed embedded minimal hypersurface in a simply connected complete Riemannian manifold with Ricci curvature bounded below by a positive constant. More precisely, they proved the following result.

\begin{theorem}\label{t1} Let $\overline{M}$ be a simply connected complete Riemannian manifold with Ricci curvature bounded below by a constant $k>0$  and $M$ be a closed embedded minimal hypersurfaces. Then, the first nonzero eigenvalue of the Laplacian $\Delta$ on $M$ is at least $\frac{k}{2}$.
\end{theorem}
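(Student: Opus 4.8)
The plan is to prove Theorem~\ref{t1} by combining Reilly's formula with the elementary topological fact that a closed embedded hypersurface in a simply connected manifold separates the ambient space. Note first that the hypothesis $\Rc \geq k > 0$ forces $\overline{M}$ to be compact by the Bonnet--Myers theorem, so I may treat $\overline{M}$ as a closed simply connected Riemannian manifold throughout.

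\textbf{Separation.} I would first show that $M$ splits $\overline{M}$ into two connected components $\Omega_1$ and $\Omega_2$, each a compact manifold with smooth boundary $\partial\Omega_i = M$. Since $\overline{M}$ is simply connected, $H_1(\overline{M};\mathbb{Z})=0$ and hence $H^1(\overline{M};\mathbb{Z}_2)=0$; by Poincar\'e duality the codimension-one $\mathbb{Z}_2$-fundamental class represented by the embedded hypersurface $M$ is therefore null, so $M$ bounds, is two-sided, and separates $\overline{M}$. Let $\eta_1$ denote the unit normal along $M$ outward from $\Omega_1$, so that $\eta_2 = -\eta_1$ is outward from $\Omega_2$.

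\textbf{Harmonic extension and Reilly's formula.} Let $u$ be a first eigenfunction, $\Delta u = -\lambda_1 u$ on $M$ with $u \not\equiv 0$, and use standard elliptic theory to solve the Dirichlet problems $\Delta v_i = 0$ in $\Omega_i$, $v_i|_M = u$, producing unique smooth harmonic extensions $v_1, v_2$. Applying Reilly's formula to $v_i$ on $\Omega_i$ and using $\Delta v_i = 0$ together with the minimality $H=0$ of $M$, I obtain for each $i$
\[
\int_{\Omega_i} \left( |\nabla^2 v_i|^2 + \Rc(\nabla v_i, \nabla v_i) \right) = -\int_M \left( 2\,\frac{\partial v_i}{\partial\eta_i}\,\Delta_M u + \mathrm{II}_i(\nabla u, \nabla u) \right),
\]
where $\mathrm{II}_i$ is the second fundamental form with respect to $\eta_i$. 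The crucial point, which is exactly where embeddedness enters, is that $\eta_2 = -\eta_1$ forces $\mathrm{II}_2 = -\mathrm{II}_1$, so summing the two identities cancels the second fundamental form terms entirely.

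\textbf{Conclusion.} Adding the two identities and substituting $\Delta_M u = -\lambda_1 u$ yields
\[
\sum_i \int_{\Omega_i}\left(|\nabla^2 v_i|^2 + \Rc(\nabla v_i, \nabla v_i)\right) = 2\lambda_1 \sum_i \int_M u\,\frac{\partial v_i}{\partial\eta_i}.
\]
Green's identity for the harmonic functions $v_i$ converts each boundary integral into a Dirichlet energy, $\int_M u\,\partial_{\eta_i} v_i = \int_{\Omega_i}|\nabla v_i|^2$. Discarding the nonnegative Hessian terms and applying $\Rc \geq k$ then gives $2\lambda_1 \sum_i \int_{\Omega_i}|\nabla v_i|^2 \geq k \sum_i \int_{\Omega_i}|\nabla v_i|^2$, and since $u$ is nonconstant the total Dirichlet energy is strictly positive, so dividing produces $\lambda_1 \geq k/2$.

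I expect the main obstacle to be the separation step rather than the analysis: the entire argument hinges on $M$ bounding two domains so that the contributions of $\mathrm{II}$ from the two sides cancel, which is precisely why embeddedness (as opposed to mere immersion) and the simple connectivity of $\overline{M}$ are indispensable---for an immersed minimal hypersurface an uncontrolled second fundamental form term would survive. The remaining technical points, namely the exact sign conventions in Reilly's formula and boundary regularity of the harmonic extensions, are routine once the separation is established.
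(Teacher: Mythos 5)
Your proof is correct, and its overall architecture --- separating $\overline{M}$ along the closed embedded hypersurface, harmonically extending a first eigenfunction into the resulting components, and feeding the extensions into Reilly's formula --- is exactly the scheme underlying this paper. Note that the paper itself only quotes Theorem \ref{t1} from Choi--Wang; the argument it actually writes out is the proof of the weighted generalization, Theorem \ref{tp}, via the weighted Reilly-type inequality of Lemma \ref{prop1} and Corollary \ref{cor1}. The one genuine difference is how the second fundamental form term is handled: the paper, following Choi--Wang, keeps only one component, choosing $\Omega$ versus $\tilde{\Omega}$ so that $\int_M B(\nabla\varphi,\nabla\varphi)\,d\mu\leq 0$, whereas you apply Reilly's formula on \emph{both} components and add, so that $\mathrm{II}_2=-\mathrm{II}_1$ cancels the terms identically. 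Your summation trick is a real, if modest, simplification: it removes the case analysis and directly yields $2\lambda_1 E\geq kE$, where $E>0$ is the total Dirichlet energy of the two extensions. Conversely, what the paper's formulation must additionally handle (and what your compact argument can ignore, thanks to Bonnet--Myers and standard solvability of the Dirichlet problem) is the noncompact weighted setting: there the $f$-harmonic extension is built variationally via a Poincar\'e inequality and a compact embedding, the Reilly identity survives only as a cutoff inequality, and the favorable-side choice is made along a subsequence of cutoffs; a covering argument then removes simple connectedness, which in your setting is an assumption. Two small points you should make explicit: $M$ should be connected so that $\overline{M}\setminus M$ has exactly two components, each with boundary equal to all of $M$ (this is what makes $\mathrm{II}_2=-\mathrm{II}_1$ a statement about one and the same surface), and the harmonic extensions are smooth up to $M$, which is what legitimizes the boundary terms in Reilly's formula.
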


It is very important to highlight that this result is related to Yau's conjecture. Let $\mathbb{S}^{m+1}$ be the unit sphere in $\mathbb{R}^{m+2}$ and $M$ a closed embedded minimal hypersurface within $\mathbb{S}^{m+1}$. It is well known that all coordinate functions on $\mathbb{R}^{m+2}$ restricted to $M$ are eigenfunctions of the Laplacian $\Delta$ on $M$ corresponding to the eigenvalue $m$. In particular, the first eigenvalue $\lambda_1(\Delta)$ of $\Delta$ satisfies $\lambda_1(\Delta)\leq m$. Yau conjectured \cite{YAU} that $\lambda_1(\Delta)=m$. Observe that, by Theorem \ref{t1} we have $\lambda_1(\Delta)\geq \frac{m}{2}$, favoring Yau's conjecture. Recently, this estimate was improved by Jiménez, Chinchay and Zhou \cite{JCZ}, who showed that $\lambda_1(\Delta)>\frac{m}{2}+C$, where $C$ is a positive constant that depends only on $m$ and the norm of the second fundamental form of $M$. For more progress on Yau's conjecture, we refer to \cite{CS2009}, \cite{DSS}, and \cite{TXY}.

Later, Choi and Schoen \cite{CS1985}, used a covering argument to show that the hypothesis of $\overline{M}$ being simply connected in Theorem \ref{t1} is not necessary. Ma and Du \cite{MD} extended Theorem \ref{t1} to the first eigenvalue of the drifted Laplacian $\Delta_f$ on a closed embedded $f$-minimal hypersurface in a simply connected compact complete smooth metric measure space $(\overline{M},g,f)$ with Bakry-Émery Ricci curvature $\overline{\textrm{Ric}}_f$ bounded below by a positive constant. Recently, Li and Wei \cite{LW} also used a covering argument to show that the hypothesis of $\overline{M}$ being simply connected is not necessary in the result of Ma and Du. Therefore, the following result was established.

\begin{theorem}\label{t2} Let $(\overline{M},g,f)$ be a compact complete smooth metric measure space with Bakry-Émery Ricci curvature $\overline{\textrm{Ric}}_f$ bounded below by a constant $k>0$ and $M$ be a closed embedded $f$-minimal hypersurface. Then, the first nonzero eigenvalue of $\Delta_f$ on $M$ is at least $\frac{k}{2}$.
\end{theorem}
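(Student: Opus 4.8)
The plan is to transplant Choi and Wang's argument to the weighted setting, the two essential ingredients being a \emph{weighted Reilly formula} applied to a $\Delta_f$-harmonic extension of a first eigenfunction, and a two-domain cancellation that removes the sign-indefinite second fundamental form boundary term. The reason $f$-minimality and embeddedness both enter is exactly that they are what make this cancellation possible.

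First I would reduce to the case in which $M$ separates $\overline{M}$. Since $M$ is embedded and two-sided (it carries a global unit normal $\eta$, as in the definition of $H_f$), if it already separates $\overline{M}$ we proceed directly; otherwise I would pass to the connected double cover of $\overline{M}$ associated to the $\mathbb{Z}_2$-intersection homomorphism, in which a lift of $M$ becomes separating, exactly as in Choi--Schoen \cite{CS1985} and Li--Wei \cite{LW}. Pulling back the potential preserves both $\overline{\textrm{Ric}}_f \geq k$ and the $f$-minimality of the lifted hypersurface, and a first nonzero eigenfunction of $\Delta_f$ on $M$ lifts to one with the same eigenvalue, so proving the estimate upstairs yields it on $M$. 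After this reduction I write $\overline{M} = \Omega_1 \cup M \cup \Omega_2$ with $\partial\Omega_i = M$, and I take $u$ with $\Delta_f u = -\lambda_1 u$ and $\int_M u\, e^{-f} = 0$.

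Next, on each $\Omega_i$ I solve the weighted Dirichlet problem $\Delta_f \phi_i = 0$ in $\Omega_i$ with $\phi_i|_M = u$; as $\Delta_f$ is uniformly elliptic and $\overline{M}$ is compact with smooth boundary $M$, a unique solution exists, smooth up to $M$. Applying the weighted Reilly formula
\begin{equation*}
\int_{\Omega_i}\!\big[(\Delta_f\phi_i)^2 - |\Hess\,\phi_i|^2 - \overline{\textrm{Ric}}_f(\nabla\phi_i,\nabla\phi_i)\big]e^{-f} = \int_{M}\!\big[2(\phi_i)_\nu\,\Delta_f u + H_f\,(\phi_i)_\nu^2 + \textrm{II}_i(\nabla u,\nabla u)\big]e^{-f},
\end{equation*}
with $\nu$ the outward normal of $\Omega_i$, and then using $\Delta_f\phi_i = 0$, the $f$-minimality $H_f = 0$, the eigenvalue equation $\Delta_f u = -\lambda_1 u$, and the weighted divergence identity $\int_M u\,(\phi_i)_\nu\, e^{-f} = \int_{\Omega_i}|\nabla\phi_i|^2\, e^{-f}$, I obtain
\begin{equation*}
\int_{\Omega_i}\!\big[|\Hess\,\phi_i|^2 + \overline{\textrm{Ric}}_f(\nabla\phi_i,\nabla\phi_i)\big]e^{-f} = 2\lambda_1\!\int_{\Omega_i}|\nabla\phi_i|^2\, e^{-f} - \int_M \textrm{II}_i(\nabla u,\nabla u)\,e^{-f}.
\end{equation*}
Discarding $|\Hess\,\phi_i|^2 \geq 0$ and inserting $\overline{\textrm{Ric}}_f \geq k$ gives $k\int_{\Omega_i}|\nabla\phi_i|^2 e^{-f} \leq 2\lambda_1\int_{\Omega_i}|\nabla\phi_i|^2 e^{-f} - \int_M \textrm{II}_i(\nabla u,\nabla u)\,e^{-f}$ for $i=1,2$.

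The decisive step is to add the two inequalities. The outward normals of $\Omega_1$ and $\Omega_2$ along $M$ are opposite, so $\textrm{II}_1 = -\,\textrm{II}_2$ and the boundary integrals cancel, leaving $k\sum_i\int_{\Omega_i}|\nabla\phi_i|^2 e^{-f} \leq 2\lambda_1\sum_i\int_{\Omega_i}|\nabla\phi_i|^2 e^{-f}$; since $u$ is nonconstant the total weighted Dirichlet energy is strictly positive, whence $\lambda_1 \geq \tfrac{k}{2}$. I expect the main obstacle to be precisely the indefinite term $\int_M \textrm{II}(\nabla u,\nabla u)$: on a single domain it has no controllable sign, and it is only the embeddedness of $M$ (providing two complementary domains with opposite normals) together with the $f$-minimality (which annihilates the only other boundary contribution $H_f (\phi_i)_\nu^2$) that lets the cancellation go through. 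A secondary technical point is to pin down the correct weighted Reilly formula---in particular that the boundary operator there is the intrinsic drifted Laplacian $\Delta_f$ of $M$ and the relevant mean curvature is the weighted $H_f$---so that the eigenvalue equation and $f$-minimality substitute cleanly.
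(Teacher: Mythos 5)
Your analytic core is correct, but it is worth situating it against the paper: this paper does not prove Theorem \ref{t2} at all --- it quotes it as the combination of Ma--Du \cite{MD} (a weighted Reilly formula plus the Choi--Wang two-domain argument, under a simple connectedness hypothesis) and Li--Wei \cite{LW} (a covering argument removing simple connectedness). Your proposal is essentially a reconstruction of that cited proof, with one pleasant variant: by solving the weighted Dirichlet problem on \emph{both} components and adding, you cancel the indefinite second-fundamental-form term outright, whereas Choi--Wang \cite{CW1983} and this paper (proof of Theorem \ref{tp}, Section \ref{sec5}) instead select the component on which $\int_M B(\nabla u,\nabla u)e^{-f}d\mu\leq 0$. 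By contrast, the paper's own machinery, built for noncompact Ricci shrinkers, replaces your two classical Dirichlet solutions by a single global minimizer of the weighted Dirichlet energy in $H^1(\overline{M},\nu)$ (direct method, using the Bakry--\'Emery Poincar\'e inequality and the compact embedding $H^1(\overline{M},\nu)\subset\subset L^2(\overline{M},\nu)$), replaces the pointwise Reilly identity by a cutoff integral inequality (Lemma \ref{prop1}, Corollary \ref{cor1}), and handles nontrivial $\pi_1$ via the universal cover, which has finitely many sheets. In the compact setting of Theorem \ref{t2} none of those complications are needed, and your simpler route (classical Schauder solvability of $\Delta_f\phi_i=0$, exact Reilly identity, $\int_M u(\phi_i)_\nu e^{-f}d\mu=\int_{\Omega_i}|\overline{\nabla}\phi_i|^2e^{-f}d\nu$) is legitimate; it also only uses $\overline{\textrm{Ric}}_f\geq k$ rather than the soliton equation.

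The one genuine error is topological, in your reduction step. In the double cover $p:\hat{M}\rightarrow\overline{M}$ associated to the $\mathbb{Z}_2$-intersection homomorphism of a non-separating $M$, a single lift of $M$ does \emph{not} become separating. Since $M$ is two-sided, every loop inside $M$ can be pushed off $M$, and every loop in $\overline{M}\setminus M$ avoids $M$; hence both $M$ and $\overline{M}\setminus M$ lie in the kernel of the intersection homomorphism and lift trivially: $p^{-1}(M)=M_1\sqcup M_2$ and $p^{-1}(\overline{M}\setminus M)=N_1\sqcup N_2$, two copies of $\overline{M}$ cut along $M$. Then $\hat{M}\setminus M_1=N_1\cup_{M_2}N_2$ is connected (picture two parallel meridians on a torus), so your decomposition ``$\hat{M}=\Omega_1\cup M\cup\Omega_2$ with $\partial\Omega_i=M$'' does not exist; what separates is the full preimage $M_1\sqcup M_2$, with $\partial N_1=\partial N_2=M_1\sqcup M_2$. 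The proof survives with this disconnected hypersurface: lift the eigenfunction $u$ to both copies, solve the Dirichlet problems on $N_1$ and $N_2$, and note that along each copy $M_j$ the outward normals of $N_1$ and $N_2$ are opposite, so when you add the two Reilly inequalities the boundary integrals cancel copy by copy, and the conclusion $k\leq 2\lambda_1$ follows as before. Alternatively, argue as in \cite{CS1985}, \cite{LW} and Section \ref{sec5} of this paper: compactness together with $\overline{\textrm{Ric}}_f\geq k>0$ (and $f$ bounded) forces $\pi_1(\overline{M})$ to be finite, so the universal cover is compact; there every closed embedded hypersurface separates, and applying the separating case to one component $\hat{\Sigma}$ of $p^{-1}(M)$, on which the lifted eigenfunction is a nonconstant eigenfunction with eigenvalue $\lambda_1(\Delta_f)$, gives $\lambda_1(\Delta_f)\geq\lambda_1(\Delta_{\hat{f}})\geq\frac{k}{2}$.
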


Note that, by the Bonnet-Myers Theorem, a complete Riemannian manifold with Ricci curvature bounded below by a positive constant is compact. However, a complete smooth metric measure space with Bakry-Émery Ricci curvature bounded below by a positive constant is not necessarily compact. An example is the Gaussian shrinker $(\mathbb{R}^n,g_{can},\frac{|x|^2}{4})$. Hence, Theorem \ref{t2} cannot be applied to self-shrinkers. Ding and Xin \cite{DX} proved that Theorem \ref{t2} is valid when the ambient smooth metric space measure is the Gaussian shrinker $(\mathbb{R}^n,g_{can},\frac{|x|^2}{4})$, that is, if $M$ is a closed $n$-dimensional embedded self-shrinker in Euclidean space $\mathbb{R}^{n+1}$, then the first nonzero eigenvalue of the drifted Laplacian $\mathcal{L}=\Delta-\frac{1}{2}\langle x, \nabla \cdot\rangle$ on $M$ is at least $\frac{1}{4}$.

Recently, Cheng, Mejia, and Zhou \cite{CMZ2014} proved, with a hypothesis on the potential function $f$, that Theorem \ref{t2} holds when the ambient complete smooth metric measure space is noncompact. More precisely, they proved the following theorem.

\begin{theorem}\label{t3} Let $(\overline{M},g,f)$ be a complete smooth metric measure space with Bakry-Émery Ricci curvature $\overline{\textrm{Ric}}_f$ bounded below by a constant $k>0$ and $M$ be a closed embedded $f$-minimal hypersurfaces. If there is a bounded domain $D$ in $\overline{M}$ with convex boundary $\partial D$ so that $M$ is contained in $D$, then the first nonzero eigenvalue of $\Delta_f$ on $M$ is at least $\frac{k}{2}$.
\end{theorem}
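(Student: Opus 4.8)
The plan is to carry the Choi--Wang argument into the weighted setting, the point being that containment in the bounded convex domain $D$ replaces the compactness that Bonnet--Myers would have supplied in the unweighted case. First I would establish the \emph{weighted Reilly formula}. Integrating the weighted Bochner identity
\[
\tfrac12\Delta_f|\nabla\phi|^2=|\nabla^2\phi|^2+\langle\nabla\phi,\nabla\Delta_f\phi\rangle+\overline{\textrm{Ric}}_f(\nabla\phi,\nabla\phi)
\]
against $e^{-f}dv$ over a domain $\Omega\subset\overline{M}$ and using the weighted divergence theorem $\int_\Omega\Delta_f\psi\,e^{-f}=\int_{\partial\Omega}\partial_\nu\psi\,e^{-f}$ twice yields
\[
\int_\Omega\left[(\Delta_f\phi)^2-|\nabla^2\phi|^2-\overline{\textrm{Ric}}_f(\nabla\phi,\nabla\phi)\right]e^{-f}=\int_{\partial\Omega}\left[H_f(\partial_\nu\phi)^2+2\,\partial_\nu\phi\,\Delta_f^{\partial\Omega}z+\textrm{II}(\nabla z,\nabla z)\right]e^{-f},
\]
where $z=\phi|_{\partial\Omega}$, $\nu$ is the outward unit normal, and $H_f$, $\textrm{II}$, $\Delta_f^{\partial\Omega}$ are the weighted mean curvature, second fundamental form, and drifted Laplacian of $\partial\Omega$.

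Next I would set up the two regions. Since $M$ is closed and embedded I may assume, after passing to a cover on which $M$ lifts to a separating hypersurface (as in Choi--Schoen and Li--Wei), that $M$ splits $D$ into an inner region $\Omega_1$ with $\partial\Omega_1=M$ and an outer region $\Omega_2$ with $\partial\Omega_2=M\cup\partial D$. Let $u$ be a first eigenfunction, so $\Delta_f^M u=-\lambda_1 u$, $\int_M u\,e^{-f}=0$, and $u\not\equiv\textrm{const}$. On $\Omega_1$ solve the Dirichlet problem $\Delta_f\phi_1=0$, $\phi_1|_M=u$; on $\Omega_2$ solve the mixed problem $\Delta_f\phi_2=0$, $\phi_2|_M=u$, $\partial_\nu\phi_2=0$ on $\partial D$. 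Because $M$ lies in the interior of $D$, the two boundary components of $\Omega_2$ are disjoint and smooth, so this mixed problem is solvable with smooth solution by standard elliptic theory.

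I would then apply the weighted Reilly formula to $\phi_1$ on $\Omega_1$ and to $\phi_2$ on $\Omega_2$ and add. On $M$ the $f$-minimality gives $H_f=0$, the boundary datum is $u$ with $\Delta_f^M u=-\lambda_1 u$, and the second fundamental forms seen from the two sides are opposite, so the $\textrm{II}$-terms on $M$ cancel in the sum; on $\partial D$ the Neumann condition annihilates every term except $\int_{\partial D}\textrm{II}(\nabla\phi_2,\nabla\phi_2)e^{-f}$. Writing $A=\int_{\Omega_1}|\nabla\phi_1|^2e^{-f}=\int_M u\,\partial_\nu\phi_1\,e^{-f}$ and $B=\int_{\Omega_2}|\nabla\phi_2|^2e^{-f}=\int_M u\,\partial_\nu\phi_2\,e^{-f}$ (Green's identity with the respective outward normals), the sum rearranges to
\[
\int_{\partial D}\textrm{II}(\nabla\phi_2,\nabla\phi_2)\,e^{-f}=2\lambda_1(A+B)-\sum_{i=1,2}\int_{\Omega_i}\left[|\nabla^2\phi_i|^2+\overline{\textrm{Ric}}_f(\nabla\phi_i,\nabla\phi_i)\right]e^{-f}.
\]
Since $\overline{\textrm{Ric}}_f\ge k$ and $|\nabla^2\phi_i|^2\ge0$, the subtracted integral is at least $k(A+B)$, so the right-hand side is at most $(2\lambda_1-k)(A+B)$. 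The left-hand side is nonnegative by the convexity of $\partial D$, and $A+B>0$ because $u$, hence $\phi_1$, is nonconstant; therefore $2\lambda_1-k\ge0$, i.e. $\lambda_1\ge k/2$.

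The heart of the matter, and the main obstacle, is exactly the treatment of the extra boundary $\partial D$: having dropped compactness, the outer region necessarily carries this boundary, and one must choose the Neumann condition there precisely so that, after the $f$-minimality kills the $M$-terms, the only surviving ambient boundary contribution is the tangential term $\textrm{II}(\nabla\phi_2,\nabla\phi_2)$, whose sign is controlled by convexity. Secondary points that need care are the reduction to the separating case by a covering argument compatible with the smooth metric measure structure, and the full justification of the integrations by parts (existence and boundary regularity of $\phi_2$ up to $M\cup\partial D$) that make the weighted Reilly identity rigorous; these are routine given that $D$ is bounded with smooth, disjoint boundary pieces.
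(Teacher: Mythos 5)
Your proposal is correct in substance, but note that the paper itself never proves Theorem \ref{t3}: it is quoted from Cheng--Mejia--Zhou \cite{CMZ2014}, and the paper's own machinery (Lemma \ref{prop1}, Corollary \ref{cor1}, and the proof of Theorem \ref{tp}) is developed for the Ricci shrinker setting, where there is no auxiliary convex domain at all. Comparing your route with that machinery: the paper (following Choi--Wang, and this is also the mechanism in \cite{CMZ2014}) uses a side-selection argument --- replace $\Omega$ by $\tilde{\Omega}$ if necessary so that $\int_M B(\nabla\varphi,\nabla\varphi)\,d\mu\leq 0$, then run the Reilly/Bochner inequality on that single side --- whereas you solve boundary value problems on \emph{both} sides (Dirichlet data $u$ on $M$, Neumann data on $\partial D$) and add the two weighted Reilly identities, so that the second fundamental form terms on $M$ cancel identically, the two extensions having the same tangential gradient and opposite normals. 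Your addition trick buys two things: it removes the case analysis, and it makes the argument insensitive to the topology of the decomposition. Indeed, in general both components of $D\setminus M$ may touch $\partial D$, so your standing assumption $\partial\Omega_1=M$ need not hold; but since every $\partial D$ contribution is reduced by the Neumann condition to $\int \mathrm{II}(\overline{\nabla}\phi,\overline{\nabla}\phi)\,e^{-f}dv\geq 0$ and then discarded by convexity, your proof goes through verbatim with Neumann pieces allowed on both sides --- a robustness the side-selection argument also has, but only after the same Neumann/convexity treatment is added to whichever side is forced on you. Your sign bookkeeping is consistent (with $\mathrm{II}$ taken with respect to the outward normal, convexity gives $\mathrm{II}\geq 0$, and $f$-minimality kills the $H_f$ terms on $M$), and the identities $A=\int_{\Omega_1}|\overline{\nabla}\phi_1|^2\,d\nu=\int_M u\,\partial_\nu\phi_1\,d\mu$ and its analogue for $B$ are the correct weighted Green identities.

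The one step you should not dismiss as routine is the covering reduction. Choi--Schoen \cite{CS1985} and Li--Wei \cite{LW} work over a \emph{compact} ambient manifold, where finiteness of the fundamental group is a Myers-type statement; here $\overline{M}$ is complete and possibly noncompact, and your reduction needs $\pi_1(\overline{M})$ to be finite so that the universal cover is finite-sheeted, the lift of $M$ remains a closed hypersurface, and each component of the preimage of $D$ remains a bounded domain with convex boundary. This is true --- a complete smooth metric measure space with $\overline{\textrm{Ric}}_f\geq k>0$ has finite fundamental group by a theorem of Wylie (for Ricci shrinkers this also follows from the finite weighted volume, cf. Remark \ref{r2}) --- but it is an actual theorem that must be invoked, not a formal adaptation of the compact case. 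Having done so, you should apply the simply connected case to each connected component $\Sigma$ of the lift of $M$ inside the corresponding component of the lift of $D$, and descend exactly as at the end of the paper's proof of Theorem \ref{tp}: the lift of a first eigenfunction restricted to $\Sigma$ is a nonconstant eigenfunction with eigenvalue $\lambda_1(M)$ (automatically of weighted mean zero, since its eigenvalue is nonzero), so $\lambda_1(M)\geq\lambda_1(\Sigma)\geq \frac{k}{2}$.
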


A closed $n$-dimensional embedded self-shrinker in Euclidean space $\mathbb{R}^{n+1}$ satisfies the assumption of Theorem \ref{t3}. Hence, Theorem \ref{t3} implies the result of Ding and Xin.
 
Note that all the results mentioned above are for closed embedded $f$-minimal hypersurfaces, which guarantees the discreteness of the spectrum of the drifted Laplacian $\Delta_f$. The noncompactness of the hypersurface allows the possibility that the spectrum of $\Delta_f$ is not discrete. Cheng and Zhou \cite{XCDZ} proved that this cannot happen for properly immersed self-shrinkers. Later, Vieira and Zhou \cite{VZ2018} extended this result for properly immersed $f$-minimal submanifolds in a Ricci shrinker with convex potential function. 

As many examples of self-shrinkers are not compact, it is natural to extend the Theorem of Ding and Xin to properly embedded $n$-dimensional self-shrinkers in $\mathbb{R}^{n+1}$.  Brendle and Tsiamis \cite{BT2024} proved some integral inequalities without knowing the existence of the eigenfunctions. Combining with the discreteness result of Cheng and Zhou\cite{XCDZ} they actually proved the following.

\begin{theorem}\label{t4} If $M$ is a properly embedded $n$-dimensional self-shrinker in Euclidean space $\mathbb{R}^{n+1}$, then the first nonzero eigenvalue of $\mathcal{L}$ on $M$ is at least $\frac{1}{4}$.
\end{theorem}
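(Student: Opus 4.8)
The plan is to combine the discreteness of the spectrum with a weighted Reilly formula applied on the two regions that $M$ bounds in the Gaussian shrinker. Throughout write $f(x)=\tfrac{|x|^2}{4}$, so that the ambient weighted manifold $(\mathbb{R}^{n+1},g_{can},f)$ is a Ricci shrinker with $\overline{\mathrm{Ric}}_f=\tfrac12 g$ (since $\mathrm{Ric}_{\mathbb{R}^{n+1}}=0$ and $\nabla^2 f=\tfrac12 g$), and $M$ being a self-shrinker is exactly the condition $H_f\equiv 0$. Since $M$ is a properly immersed self-shrinker, the theorem of Cheng and Zhou \cite{XCDZ} guarantees that the spectrum of $\mathcal{L}$ is discrete; hence the first nonzero eigenvalue $\lambda_1$ is attained by a nonconstant eigenfunction $u\in L^2(e^{-f}dv)$ with $\mathcal{L}u=-\lambda_1 u$ and $\int_M u\,e^{-f}dv=0$. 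This is the only place the discreteness result is needed: it furnishes the eigenfunction that the Reilly argument requires. (Brendle and Tsiamis \cite{BT2024} instead establish the relevant integral inequalities directly, so as not to presuppose the existence of an eigenfunction.)

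Next I exploit the embeddedness. Since $M$ is connected and properly embedded, Jordan--Brouwer separation gives $\mathbb{R}^{n+1}\setminus M=\Omega_+\sqcup\Omega_-$ and shows that $M$ is two-sided; fix a unit normal $\nu$ pointing into $\Omega_+$. On each domain I solve the ambient weighted Dirichlet problem $\Delta_f v_\pm=0$ in $\Omega_\pm$ with $v_\pm|_M=u$, seeking $v_\pm$ in the weighted Sobolev space $W^{1,2}(\Omega_\pm,e^{-f})$. Existence follows by minimizing the weighted Dirichlet energy among functions with trace $u$, and interior plus boundary elliptic regularity gives smoothness; the structural feature that makes this workable on the unbounded $\Omega_\pm$ is that the Gaussian weight has finite total mass and decays rapidly.

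I then apply the weighted Reilly formula on each $\Omega_\pm$. Using $\Delta_f v_\pm=0$, $\overline{\mathrm{Ric}}_f=\tfrac12 g$, and $H_f=0$, the bulk term reduces to $-\int_{\Omega_\pm}\bigl(|\nabla^2 v_\pm|^2+\tfrac12|\nabla v_\pm|^2\bigr)e^{-f}\le 0$, while the boundary integrand reduces to $2(v_\pm)_\nu\,\mathcal{L}u+\mathrm{II}_\pm(\nabla u,\nabla u)$. Because the two outward conormals are opposite, $\mathrm{II}_++\mathrm{II}_-=0$, so summing the two identities cancels the second-fundamental-form terms. Substituting $\mathcal{L}u=-\lambda_1 u$ and the weighted Green identities $\int_M u\,\partial_\nu v_+\,e^{-f}=-\int_{\Omega_+}|\nabla v_+|^2 e^{-f}$ and $\int_M u\,\partial_\nu v_-\,e^{-f}=\int_{\Omega_-}|\nabla v_-|^2 e^{-f}$, the summed relation becomes
\[
\sum_{\pm}\int_{\Omega_\pm}\Bigl(|\nabla^2 v_\pm|^2+\tfrac12|\nabla v_\pm|^2\Bigr)e^{-f}dv=2\lambda_1\sum_{\pm}\int_{\Omega_\pm}|\nabla v_\pm|^2\,e^{-f}dv.
\]
Discarding the nonnegative Hessian terms and dividing by $\sum_\pm\int_{\Omega_\pm}|\nabla v_\pm|^2 e^{-f}dv>0$ (positive because $u$ is nonconstant) gives $\tfrac12\le 2\lambda_1$, that is $\lambda_1\ge\tfrac14$. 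This is exactly the bound $k/2$ with $k=\tfrac12$, and it recovers Ding--Xin in the closed case.

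The main obstacle is the non-compactness of $\Omega_\pm$: the weighted Reilly formula is proved for compact domains, so the identity above must be obtained on exhaustions $\Omega_\pm\cap B_R$ and the extra boundary contributions over $\partial B_R$ shown to vanish as $R\to\infty$. This requires controlling the growth of $v_\pm$ and its first and second derivatives, and then using the polynomial volume growth of the properly immersed self-shrinker together with the factor $e^{-|x|^2/4}$ to force those spherical boundary terms to zero. Producing the needed weighted estimates — existence of $v_\pm$ in $W^{1,2}(e^{-f})$, regularity sufficient to integrate by parts, and the decay that annihilates the boundary-at-infinity terms — is the technical heart of the argument; once these are secured, the algebra above delivers $\lambda_1\ge\tfrac14$ at once.
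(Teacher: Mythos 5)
Your overall strategy coincides with the paper's own proof: use discreteness to produce a nonconstant eigenfunction, extend it to both components of $\mathbb{R}^{n+1}\setminus M$ as a weighted-harmonic function by minimizing the Gaussian Dirichlet energy, and run a weighted Bochner/Reilly argument with $\overline{\textrm{Ric}}_f=\tfrac12 g$. Your one structural variation is to apply the Reilly identity on \emph{both} domains and sum, so that $\mathrm{II}_+ + \mathrm{II}_- = 0$ cancels the second-fundamental-form terms; the paper instead picks the side on which $\int_M B(\nabla\varphi,\nabla\varphi)\,d\mu\le 0$. Both devices work, and your algebra (the summed identity and the two Green identities with opposite signs) is correct; since $\mathbb{R}^{n+1}$ is simply connected you also rightly skip the paper's covering-space step.

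The genuine gap is the step you yourself defer as ``the technical heart'': justifying the Reilly identity on the unbounded domains. Your plan — exhaust by $\Omega_\pm\cap B_R$ and kill the $\partial B_R$ contributions by ``controlling the growth of $v_\pm$ and its first and second derivatives'' — cannot be executed from what you have. The variational solution carries exactly one piece of global information, $\int_{\Omega_\pm}|\overline{\nabla}v_\pm|^2e^{-f}\,dv<\infty$; there is no a priori control of $|\overline{\nabla}^2v_\pm|$ near infinity (finiteness of $\int|\overline{\nabla}^2v_\pm|^2e^{-f}$ is essentially the quantity the argument is trying to bound), so you cannot even select radii $R_j\to\infty$ along which the spherical terms involving $|\overline{\nabla}v_\pm|\,|\overline{\nabla}^2v_\pm|$ vanish via the coarea formula, and local elliptic estimates do not help because the drift coefficient grows linearly while the weight varies by factors $e^{c|x|}$ on unit balls. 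The paper circumvents this entirely: in Lemma \ref{prop1} the compactly supported cutoff $\psi$ is inserted into the Bochner integrand \emph{before} integrating by parts, Young's inequality $2ab\le 2a^2+\tfrac12 b^2$ absorbs the dangerous term $\psi^2|\overline{\nabla}^2\Phi|^2$ into the left-hand side, and the resulting error terms involve only $|\overline{\nabla}\Phi|^2|\overline{\nabla}\psi|^2$, $|u||\nabla\varphi||\nabla\psi|$ and $|\Phi||\overline{\nabla}\Phi||\overline{\nabla}\psi|$; with annular cutoffs $\psi_k$ these tend to zero by dominated convergence purely because $\Phi\in H^1(\overline{M},\nu)$ — no pointwise growth, decay, or Hessian estimates are ever needed (see Corollary \ref{cor1} and the limit argument in Section \ref{sec5}). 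A secondary, smaller gap: existence of the minimizer $v_\pm$ is asserted rather than proved; the paper's construction uses the shrinker Poincar\'e inequality, the trace theorem, and the compact embedding $H^1(\overline{M},\nu)\subset\subset L^2(\overline{M},\nu)$ of Cheng--Zhou, although in your Gaussian setting these ingredients are classical. To close your proof, replace the exhaustion-with-boundary-terms plan by the cutoff-and-absorption argument.
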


A very important class of hypersurfaces in a complete smooth metric measure space is the class of {\it hypersurfaces with constant weighted mean curvature} (CWMC). The class of $f$-minimal hypersurfaces is the particular case of CWMC hypersurfaces with weight mean curvature  vanishes identically. An example of CWMC hypersurface is $\mathbb{S}_r^{n-k}\times \mathbb{R}^k$ in Gaussian shrinker $(\mathbb{R}^{n+1},g_{can},\frac{|x|^2}{4})$. Another example is  $\mathbb{S}_R^{n-k}\times \mathbb{S}_r^k$ in cylinder Ricci shrinker $(\mathbb{S}_R^{n-k} \times \mathbb{R}^{k+1}, g_c, f_c)$. In the case that the ambient complete smooth metric measure space is the Gaussian shrinker, a hypersurface  with constant weight mean curvature $\lambda$ is also called of {\it $\lambda$-hypersurface}. Note that when $\lambda=0$, a $\lambda$-hypersurface is just a self-shrinker. It is not known about the discreteness of the spectrum of  the drifted Laplacian on a $\lambda$-hypersurface (not necessarily self-shrinker), more generally, about the discreteness of the spectrum of the drifted Laplacian on a CWMC hypersurface (not necessarily $f$-minimal).

In this paper, we prove two main results. The first guarantees the discreteness of the spectrum of the drifted Laplacian on properly immersed submanifold in a Ricci shrinker with a hypothesis on the potential function and an upper bound on the norm of the weighted mean curvature vector, see Theorem \ref{tpdis} for more precise statement. We prove this theorem by adapting the argument used in the proof of the discreteness result of Viera and Zhou in \cite{VZ2018}. As one special case of Theorem \ref{tpdis}, we have the following result that guarantees the discreteness of the drifted Laplacian spectrum on a properly immersed CWMC hypersurface (not necessarily $f$-minimal) in a Ricci shrinker with a hypothesis on the potential function. 

\begin{theorem}\label{p3} Let $M$ be a CWMC properly immersed hypersurface in a Ricci shrinker $(\overline{M},g,f)$ with $\overline{Ric}_f=kg$.  If there is a constant $0\leq\epsilon<2k$ such that the trace of Hessian of $f$ on $\overline{M}$ restricted on the normal bundle of $M$ satisfies $\overline{R}+2\textrm{tr}_{M^{\perp}} \overline{\nabla}^2f\geq - \epsilon  f$ outside a compact set, then the spectrum of $\Delta_f$ with domain $L^2(M,e^{-f}dv)$ is discrete.
\end{theorem}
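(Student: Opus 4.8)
The plan is to diagonalize the problem by the ground-state transform and thereby turn the question about $\Delta_f$ into a question about a Schr\"odinger operator whose potential I can read off from the shrinker and CWMC structure. Concretely, the map $Uu=e^{-f/2}u$ is a unitary isomorphism $L^2(M,e^{-f}dv)\to L^2(M,dv)$, and a direct computation (conjugating $\Delta_f=\Delta_M-\langle\nabla_M f,\nabla_M\cdot\rangle$ by $e^{-f/2}$) gives $U(-\Delta_f)U^{-1}=-\Delta_M+V$ on $L^2(M,dv)$, where $\Delta_M$ is the unweighted Laplacian of the induced metric and
\[ V=\tfrac14|\nabla_M f|^2-\tfrac12\Delta_M f. \]
Since $U$ is unitary the two operators have the same spectrum, so it suffices to show $-\Delta_M+V$ has discrete spectrum. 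For this I would invoke the standard confinement criterion: on a complete manifold a Schr\"odinger operator with potential bounded below and satisfying $V(x)\to+\infty$ as $x$ leaves compact sets has purely discrete spectrum (equivalently, by a Persson-type formula $\inf\sigma_{ess}=\lim_{r\to\infty}\inf_{\mathrm{supp}\,u\subset M\setminus B_r}\mathcal R(u)\ge\lim_{r\to\infty}\inf_{M\setminus B_r}V=+\infty$). Thus the whole problem reduces to showing that $V$ is confining.

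The second step is to compute $V$ explicitly. Using the submanifold identity $\Delta_M f=\textrm{tr}_M\overline{\nabla}^2f+\langle\overline{\nabla}f,\vec H\rangle$ together with $\textrm{tr}_M\overline{\nabla}^2f=\overline{\Delta}f-\overline{\nabla}^2f(\eta,\eta)$ and $|\nabla_M f|^2=|\overline{\nabla}f|^2-f_\eta^2$ (writing $f_\eta=\langle\overline{\nabla}f,\eta\rangle$), I insert the two structural inputs. First, the CWMC condition $\vec H_f=-\lambda\eta$ forces $\vec H=-(\lambda+f_\eta)\eta$, hence $\langle\overline{\nabla}f,\vec H\rangle=-(\lambda+f_\eta)f_\eta$. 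Second, the shrinker identities $\overline{R}+\overline{\Delta}f=k\dim\overline{M}$ and (after normalizing $f$ by a constant) $\overline{R}+|\overline{\nabla}f|^2=2kf$ let me trade ambient second-order data for $f$ itself. Collecting terms and using that for a hypersurface $\textrm{tr}_{M^{\perp}}\overline{\nabla}^2f=\overline{\nabla}^2f(\eta,\eta)$, I expect
\[ V=\frac{k}{2}f+\frac14\big(\overline{R}+2\,\textrm{tr}_{M^{\perp}}\overline{\nabla}^2f\big)+\frac14(f_\eta+\lambda)^2-C_0, \]
where $C_0=\tfrac{\lambda^2}{4}+\tfrac{k\dim\overline{M}}{2}-(\text{shrinker constant})$ is an absolute constant. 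The point is that the precise combination $\overline{R}+2\,\textrm{tr}_{M^{\perp}}\overline{\nabla}^2f$ in the hypothesis is exactly the curvature term produced by this calculation, while the CWMC contribution assembles into the complete square $\tfrac14(f_\eta+\lambda)^2\ge0$.

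The third step feeds in the hypothesis. Outside a compact set, $\overline{R}+2\,\textrm{tr}_{M^{\perp}}\overline{\nabla}^2f\ge-\epsilon f$ and $\tfrac14(f_\eta+\lambda)^2\ge0$, so
\[ V\ \ge\ \frac{k}{2}f-\frac{\epsilon}{4}f-C_0\ =\ \frac{2k-\epsilon}{4}\,f-C_0. \]
Because $0\le\epsilon<2k$ the coefficient $\tfrac{2k-\epsilon}{4}$ is strictly positive, and since $M$ is properly immersed and $f$ is a proper exhaustion of the shrinker (it grows quadratically in the distance, by the growth estimate of Cao and Zhou), the restriction $f|_M$ is proper with $f|_M\to+\infty$ at infinity. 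Hence $V\to+\infty$, and $V$ is bounded below globally (smooth on the compact remainder, and bounded below by the displayed affine-in-$f$ estimate outside). The confinement criterion then gives discreteness of $\sigma(-\Delta_M+V)=\sigma(-\Delta_f)$.

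I expect the genuine difficulty to lie not in the algebra above but in the functional-analytic justification of the reduction: making the ground-state transform rigorous as an identification of the self-adjoint realizations (closability of the quadratic form, identification of form domains, and validity of the Persson formula on a complete noncompact weighted manifold). This is precisely the step I would carry out by adapting the discreteness argument of Vieira and Zhou \cite{VZ2018}; the shrinker and CWMC geometry enter only through the clean lower bound for $V$, which is the easy half once the reduction is in place.
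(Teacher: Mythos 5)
Your proposal is correct and takes essentially the same route as the paper's proof of Theorem \ref{tpdis}: the ground-state unitary transform turning $\Delta_f$ into a Schr\"odinger operator, the confinement criterion ($V\to\infty$ implies discrete spectrum), and the same combination of the submanifold Laplacian identity with the shrinker identities $\overline{\Delta}f+\overline{R}=km$ and $\overline{R}+|\overline{\nabla}f|^2=2kf+\mathrm{const}$, plus properness of the immersion, to get $V\geq \frac{2k-\epsilon}{4}f-C_0$ outside a compact set. The only difference is cosmetic: you specialize at once to the CWMC hypersurface case and complete the square as $\frac14(f_\eta+\lambda)^2$, while the paper proves the more general statement with $|\vec{H}_f|\leq\sqrt{\rho f+C}$ and writes the same square as $|\vec{H}|^2=|\vec{H}_f-(\overline{\nabla}f)^{\perp}|^2$, then deduces Theorem \ref{p3} as the case $\rho=0$.
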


As an immediate consequence of Theorem \ref{p3}, we have the discreteness of the spectrum of the drifted Laplacian on a properly immersed $\lambda$-hypersurface (not necessarily self-shrinker). More precisely,

\begin{cor}
If $M$ is a properly immersed $\lambda$-hypersurface in $\mathbb{R}^{n+1}$, then the spectrum of $\mathcal{L}$ with domain $L^2(M,e^{-\frac{|x|^2}{4}}dv)$ is discrete.
\end{cor}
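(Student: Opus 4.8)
The plan is to verify that a properly immersed $\lambda$-hypersurface $M\subset\mathbb{R}^{n+1}$ is precisely a CWMC properly immersed hypersurface in the Gaussian shrinker, and that the curvature hypothesis of Theorem \ref{p3} holds in the strongest possible way, so that the corollary follows by a direct application of that theorem. First I would identify the ambient structure: the Gaussian shrinker $(\mathbb{R}^{n+1},g_{can},f)$ with $f(x)=\frac{|x|^2}{4}$ is a Ricci shrinker with $\overline{Ric}_f=\frac{1}{2}g_{can}$, i.e.\ $k=\frac{1}{2}$. Indeed $\mathbb{R}^{n+1}$ is flat, so $\overline{Ric}=0$, while $\overline{\nabla}^2 f=\frac{1}{2}g_{can}$, giving $\overline{Ric}_f=\overline{Ric}+\overline{\nabla}^2 f=\frac{1}{2}g_{can}$. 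By definition a $\lambda$-hypersurface has constant weighted mean curvature $H_f\equiv\lambda$, hence it is a CWMC hypersurface, and in this setting the drifted Laplacian $\Delta_f$ is exactly the operator $\mathcal{L}$.

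Next I would check the curvature condition. The scalar curvature $\overline{R}$ of $\mathbb{R}^{n+1}$ vanishes. Since $M$ is a hypersurface its normal bundle is one-dimensional, and because $\overline{\nabla}^2 f=\frac{1}{2}g_{can}$, tracing over the normal direction yields $\textrm{tr}_{M^{\perp}}\overline{\nabla}^2 f=\frac{1}{2}$. Therefore
$$\overline{R}+2\,\textrm{tr}_{M^{\perp}}\overline{\nabla}^2 f=0+2\cdot\tfrac{1}{2}=1>0.$$
As $f=\frac{|x|^2}{4}\geq 0$, the right-hand side $-\epsilon f$ is nonpositive for every $\epsilon\geq 0$. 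Choosing $\epsilon=0$, which lies in the admissible range $[0,2k)=[0,1)$, the inequality $\overline{R}+2\,\textrm{tr}_{M^{\perp}}\overline{\nabla}^2 f\geq-\epsilon f$ holds everywhere, and in particular outside any compact set.

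With all hypotheses of Theorem \ref{p3} verified, I would conclude directly that the spectrum of $\mathcal{L}=\Delta_f$ with domain $L^2(M,e^{-|x|^2/4}dv)$ is discrete. There is essentially no analytic obstacle here, since the corollary is a clean specialization; the only point requiring care is the normalization, namely confirming that the Gaussian potential produces $k=\frac{1}{2}$ and that the quantity $\overline{R}+2\,\textrm{tr}_{M^{\perp}}\overline{\nabla}^2 f$ is a strictly positive constant rather than a term that might decay, which is exactly what makes the choice $\epsilon=0$ admissible and bypasses any need to control the growth of the potential along $M$.
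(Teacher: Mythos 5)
Your proposal is correct and follows essentially the same route as the paper: the corollary is stated there as an immediate consequence of Theorem \ref{p3}, and your verification --- that the Gaussian shrinker has $\overline{\textrm{Ric}}_f=\tfrac{1}{2}g_{can}$ (so $k=\tfrac{1}{2}$), that a $\lambda$-hypersurface is CWMC with $\Delta_f=\mathcal{L}$, and that $\overline{R}+2\,\textrm{tr}_{M^{\perp}}\overline{\nabla}^2 f=1\geq -\epsilon f$ holds everywhere with the admissible choice $\epsilon=0<2k$ --- is exactly the specialization the paper intends. The only remark worth adding is that the paper also needs the constant bound $|\vec{H}_f|=|\lambda|\leq\sqrt{\rho f+C}$ with $\rho=0$, $C=\lambda^2$ when tracing the corollary all the way back to Theorem \ref{tpdis}, but this is implicit in the CWMC hypothesis of Theorem \ref{p3} and does not affect your argument.
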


The second main result we prove in this paper is to use the discreteness result and eigenfunctions to give a lower bound for the first nonzero eigenvalue of the drifted Laplacian on a properly embedded $f$-minimal hypersurface in a Ricci shrinker with a hypothesis on the potential function. More precisely, we prove the following result.

\begin{theorem}\label{tp} Let $M$ be a $f$-minimal properly embedded hypersurface in a Ricci shrinker $(\overline{M},g,f)$ such that $\overline{\textrm{Ric}}_f=k g$. Assume that there is a constant $0\leq\epsilon<2k$ such that the trace of Hessian of $f$ on $\overline{M}$ restricted on the normal bundle of $M$ satisfies $\overline{R}+2\textrm{tr}_{M^{\perp}} \overline{\nabla}^2f\geq - \epsilon  f$ outside a compact set. Then the spectrum of $\Delta_f$ with domain $L^2(M,e^{-f}dv)$ is discrete and the first nonzero eigenvalue is at least $\frac{k}{2}$.
\end{theorem}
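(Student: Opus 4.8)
The plan is to combine the discreteness statement of Theorem \ref{p3} with a weighted version of the Choi--Wang argument built on the Reilly formula for the drifted Laplacian. First, since $M$ is $f$-minimal its weighted mean curvature $H_f$ vanishes identically, so $M$ is in particular a CWMC hypersurface and the hypotheses of Theorem \ref{p3} hold verbatim; hence the spectrum of $\Delta_f$ on $L^2(M,e^{-f}dv)$ is discrete. This produces a genuine first eigenfunction $u$ with $\Delta_f u=-\lambda_1 u$, $\int_M u\,e^{-f}dv=0$ and $\int_M(|\nabla u|^2+u^2)e^{-f}dv<\infty$. Because $M$ is properly embedded and two-sided, it separates $\overline{M}$ into two open regions $\Omega_1$ and $\Omega_2$ with common boundary $M$; to avoid any topological hypothesis on $\overline{M}$ I would first reduce to this separating situation by a covering argument in the spirit of Choi--Schoen and Li--Wei.

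On each region $\Omega_i$ I would solve the weighted Dirichlet problem $\Delta_f\phi_i=0$ in $\Omega_i$ with $\phi_i=u$ on $M$, obtaining an extension of finite weighted energy $\int_{\Omega_i}(|\nabla\phi_i|^2+\phi_i^2)e^{-f}dv<\infty$ by minimizing the weighted Dirichlet energy over the functions in $W^{1,2}(\Omega_i,e^{-f}dv)$ with trace $u$, the coercivity and trace estimates being supplied by the quadratic growth of $f$ and the finiteness of $\int_{\overline{M}}e^{-f}dv$ on a Ricci shrinker. To each $\phi_i$ I then apply the weighted Reilly formula. Using $\Delta_f\phi_i=0$, the fact that $\overline{\textrm{Ric}}_f=kg$ forces $\textrm{Ric}_f(\nabla\phi_i,\nabla\phi_i)=k|\nabla\phi_i|^2$, the vanishing of the weighted mean curvature of $M$, and the boundary relation $\Delta_f^M u=-\lambda_1 u$, the Reilly identity collapses to
\[
-\int_{\Omega_i}|\Hess\phi_i|^2 e^{-f}dv-k\int_{\Omega_i}|\nabla\phi_i|^2 e^{-f}dv=-2\lambda_1\int_M(\pa_{\nu_i}\phi_i)\,u\,e^{-f}dv+\int_M \mathrm{II}_i(\nabla u,\nabla u)\,e^{-f}dv,
\]
where $\nu_i$ is the outward unit normal of $\Omega_i$ along $M$ and $\mathrm{II}_i$ the associated second fundamental form.

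The two outward normals are opposite, so $\mathrm{II}_1=-\mathrm{II}_2$ and the second fundamental form terms cancel once the two identities are summed. The weighted Green identity for the harmonic extension gives $\int_M(\pa_{\nu_i}\phi_i)\,u\,e^{-f}dv=\int_{\Omega_i}|\nabla\phi_i|^2 e^{-f}dv$. Setting $E=\sum_i\int_{\Omega_i}|\nabla\phi_i|^2 e^{-f}dv$, the summed identity becomes $(2\lambda_1-k)E=\sum_i\int_{\Omega_i}|\Hess\phi_i|^2 e^{-f}dv\geq 0$. Since $u$ is nonconstant (its weighted integral vanishes while $u\not\equiv 0$), each extension is nonconstant, so $E>0$ and therefore $\lambda_1\geq\frac{k}{2}$. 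Specializing to the Gaussian shrinker, where $k=\frac12$, recovers the bound $\frac14$ of Ding--Xin and Brendle--Tsiamis, while a positive Einstein ambient with constant $f$ recovers Choi--Wang.

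The essential difficulty, absent from the compact settings of Theorems \ref{t2} and \ref{t3}, is that $\Omega_1$ and $\Omega_2$ are noncompact, so both the weighted Green identity and the weighted Reilly formula must be justified through an exhaustion: applying them on $\Omega_i\cap\{f\leq t\}$, one must show that the flux and the Bochner boundary terms over the level sets $\{f=t\}$ vanish along a sequence $t\to\infty$, and that $\int_{\Omega_i}|\Hess\phi_i|^2 e^{-f}dv<\infty$. This is precisely where the hypothesis $\overline{R}+2\,\textrm{tr}_{M^{\perp}}\overline{\nabla}^2 f\geq-\epsilon f$ with $0\leq\epsilon<2k$ --- the same condition that drives the discreteness in Theorem \ref{p3} --- is expected to furnish the decay of $u$, of the extensions $\phi_i$ and of their gradients needed to annihilate the terms at infinity. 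I expect this integrability and decay bookkeeping, rather than the algebra of the Reilly identity, to be the main technical obstacle of the proof.
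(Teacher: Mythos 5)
Your outline reproduces the paper's architecture almost exactly: discreteness from Theorem \ref{p3}, reduction to a separating hypersurface via the finiteness of the fundamental group of a Ricci shrinker and a lift to the universal cover, an $f$-harmonic extension of the eigenfunction obtained by minimizing the weighted Dirichlet energy (the paper secures the minimizer via the Poincar\'e inequality for $\overline{\textrm{Ric}}_f=\tfrac12 g$, a Sobolev trace estimate, and the compact embedding $H^1(\overline{M},\nu)\subset\subset L^2(\overline{M},\nu)$ of Cheng--Zhou), and a weighted Reilly/Bochner argument. Your one algebraic variant --- summing the identities over $\Omega_1$ and $\Omega_2$ so that the terms $\int_M \mathrm{II}_i(\nabla u,\nabla u)e^{-f}$ cancel --- is a legitimate substitute for the paper's trick of choosing the side on which $\int_M B(\nabla\varphi,\nabla\varphi)\,d\mu\leq 0$.

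The genuine gap is the step you defer to ``integrability and decay bookkeeping''. You propose to validate the exact weighted Reilly formula and Green identity on the noncompact regions by exhausting with $\{f\leq t\}$ and showing that the flux terms on $\{f=t\}$ vanish, with the needed decay of $u$, $\phi_i$, $\nabla\phi_i$ ``expected'' from the hypothesis $\overline{R}+2\,\textrm{tr}_{M^{\perp}}\overline{\nabla}^2 f\geq-\epsilon f$. This cannot work as stated: that hypothesis is a condition along the normal bundle of $M$ only, and its sole role (here as in Theorem \ref{p3}) is to make the potential $\tfrac14|\nabla f|^2-\tfrac12\Delta f$ proper on $M$, i.e.\ to give discreteness; it gives no control whatsoever of the harmonic extensions inside $\Omega_i$. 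An energy-minimizing extension is a priori only in $H^1(\overline{M},\nu)$, with no pointwise decay of $|\nabla\phi_i|$ or $|\Hess\phi_i|$ along level sets of $f$, and not even a priori finiteness of $\int_{\Omega_i}|\Hess\phi_i|^2e^{-f}$. The paper's Section \ref{sec4} exists precisely to avoid this issue: instead of the exact Reilly identity it proves a cutoff integral \emph{inequality} (Lemma \ref{prop1}, Corollary \ref{cor1}) in which Young's inequality absorbs the dangerous cross terms into $\tfrac12\int\psi^2|\overline{\nabla}^2\Phi|^2$ and $\tfrac12\int\psi^2(\overline{\Delta}_f\Phi)^2$, so the only errors are integrals of $|\overline{\nabla}\Phi|^2$, $|\Phi|\,|\overline{\nabla}\Phi|$ and $|u|\,|\nabla\varphi|$ against gradients of ball cutoffs $\psi_k$ supported in annuli $B_{k+1}\setminus B_k$; these vanish as $k\to\infty$ by dominated convergence, using only the finite weighted $H^1$ norm of the minimizer and the finiteness of the weighted volume --- no decay at infinity is ever needed. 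To repair your write-up, replace the level-set exhaustion by this cutoff-inequality mechanism; your summing trick is compatible with it, since the cutoff is an ambient function and the terms $\int_M\psi_k^2 B(\nabla\varphi,\nabla\varphi)\,d\mu$ still cancel between the two sides (this would even spare the paper's passage to a subsequence of favorable signs).
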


\begin{remark}
The condition on the potential function in Theorem \ref{tp} is not strong, since the scalar curvature is nonnegative and in many examples, such as cylinder shrinkers, their potential functions are convex. In particular, a $n$-dimensional properly embedded self-shrinker in Euclidean space $\mathbb{R}^{n+1}$ satisfies the assumption of Theorem \ref{tp}. Therefore, Theorem \ref{tp} implies Theorem \ref{t4} of Brendle and Tsiamis. Using eigenfunctions, our proof is simpler.  On the other hand, Theorem \ref{tp} also contains the case of compact minimal hypersurfaces in a positive Einstein manifold, in particular Choi and Wang's estimate for minimal hypersurfaces in a round sphere. 
   
\end{remark}
\begin{remark}
We would like to point out that the estimates in the above theorems are not sharp as shown in \cite{JCZ}. Also, the proof does not apply to the case CWMC hypersurfaces.
\end{remark}

The rest of the article is organized as follows. In Section \ref{sec2}, we fix some notation and recall the key definitions and necessary results in our arguments. In Section \ref{sec3}, we prove the discreteness of the spectrum of the drifted Laplacian on properly immersed submanifold in a Ricci shrinker with a hypothesis on the potential function and an upper bound on the norm of the weighted mean curvature vector. In Section \ref{sec4}, we prove an integral inequality of the weighted Bochner formula which we apply in the following section to prove Theorem \ref{tp}. Finally, in Section \ref{sec5}, we present the proof of Theorem \ref{tp}.

\section{Preliminaries}\label{sec2}

In this section, we fix some notation and recall fundamental definitions and necessary results concerning Ricci shrinkers and $f$-minimal submanifolds for a better understanding of the subsequent sections.

Let us recall some useful facts about Ricci shrinkers.

\begin{theorem}[Chen, \cite{chen}]\label{chen} Let $(N,g,f)$ be a Ricci shrinker. Then $g$ has nonnegative scalar curvature $R\geq 0$. 
\end{theorem}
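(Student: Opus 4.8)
The plan is to reduce the statement to a minimum principle for the scalar curvature $R$, driven by the elliptic equation that $R$ satisfies on any gradient Ricci soliton. Writing the soliton equation in local coordinates as $R_{ij}+\nabla_i\nabla_j f = k\,g_{ij}$ and tracing gives $R+\Delta f = nk$, where $n=\dim N$. Differentiating, and invoking the contracted second Bianchi identity $\nabla^iR_{ij}=\tfrac12\nabla_jR$ together with the commutation formula $\Delta\nabla_jf=\nabla_j\Delta f+R_{jk}\nabla^kf$, yields the standard soliton identities
\begin{equation*}
\nabla R = 2\,\Rc(\nabla f,\cdot),\qquad \Delta R=\langle\nabla f,\nabla R\rangle+2kR-2|\Rc|^2 .
\end{equation*}
In terms of the drifted Laplacian $\Delta_f=\Delta-\langle\nabla f,\nabla\cdot\rangle$ of $(N,g,f)$, the second identity reads
\begin{equation*}
\Delta_f R = 2kR-2|\Rc|^2 .
\end{equation*}
First I would derive this identity carefully, since it is the only genuinely computational input.

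Next I apply the algebraic inequality $|\Rc|^2\ge \tfrac1n R^2$ (Cauchy--Schwarz on the eigenvalues of the Ricci tensor), which converts the identity into the differential inequality
\begin{equation*}
\Delta_f R \le 2kR-\tfrac{2}{n}R^2 = 2R\Big(k-\tfrac{R}{n}\Big).
\end{equation*}
When $N$ is compact the conclusion is immediate: at an interior minimum $p$ of $R$ one has $\nabla R(p)=0$ and $\Delta R(p)\ge0$, hence $\Delta_f R(p)\ge0$, so $0\le 2R(p)\big(k-R(p)/n\big)$. If $R(p)<0$ then $k-R(p)/n>0$ and the right-hand side is strictly negative, a contradiction; therefore $\min_N R\ge0$.

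The main obstacle is that a Ricci shrinker need not be compact (for instance the Gaussian shrinker), so the naive minimum argument does not apply directly. To handle this I would exploit the self-similar structure: the shrinker generates an ancient Ricci flow $g(t)$, $t\in(-\infty,T)$, along which the scalar curvature evolves by $\partial_tR=\Delta R+2|\Rc|^2\ge \Delta R+\tfrac2nR^2$. Running the maximum principle and comparing $R_{\min}(t)=\inf_N R(\cdot,t)$ with solutions of the ODE $\phi'=\tfrac2n\phi^2$, one sees that a strictly negative value of $R_{\min}$ at some time would force $R_{\min}\to-\infty$ in finite backward time, contradicting smoothness of the flow on all of $(-\infty,T)$; hence $R\ge0$ at every time, and in particular on the soliton slice. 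Equivalently, since $\Rc_f=kg$ is bounded below by the positive constant $k$, the weighted manifold satisfies a generalized (Omori--Yau type) maximum principle for $\Delta_f$, which one can feed into the inequality above to reach the same contradiction. I expect the delicate point to be precisely this noncompact step --- justifying that $R$ is bounded below and that the maximum principle is applicable --- whereas the curvature identity and the Cauchy--Schwarz estimate are routine.
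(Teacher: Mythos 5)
First, a point of comparison: the paper does not prove this statement at all --- it is quoted as Theorem~\ref{chen} with a citation to Chen's paper \emph{Strong uniqueness of the Ricci flow} --- so your attempt must be measured against Chen's original argument. Your computational inputs are correct: the identities $\nabla R=2\,\Rc(\nabla f,\cdot)$ and $\Delta_f R=2kR-2|\Rc|^2$ are the standard soliton identities, the Cauchy--Schwarz step is fine, and your compact case is complete. But the noncompact case is where the entire content of the theorem lives, and there your sketch has a genuine gap. The comparison of $R_{\min}(t)=\inf_N R(\cdot,t)$ with the ODE $\phi'=\tfrac{2}{n}\phi^2$ presupposes both that $R_{\min}(t)>-\infty$ and that the parabolic maximum principle applies on a complete noncompact Ricci flow; the classical versions of that principle require bounded curvature (or comparable growth hypotheses), and a general shrinker comes with no such bounds. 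Chen's theorem is precisely the assertion that these hypotheses can be dispensed with: his proof is a \emph{local} one, using cutoff functions on parabolic cylinders to establish a localized lower bound of the shape $R\geq -\tfrac{n}{2(t-\alpha)}$ with no curvature assumptions whatsoever, and then letting the cylinders exhaust the ancient self-similar flow to conclude $R\geq 0$. As written, your plan assumes the conclusion of exactly this hard step. (A smaller but real point: even generating the ancient flow from the soliton requires completeness of the flow of $\nabla f$, which is a theorem of Zhang for gradient solitons, not an automatic fact.)

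Your fallback via an Omori--Yau or weak maximum principle for $\Delta_f$ is circular in the way you yourself suspect. Such principles do hold here --- $\Rc_f=kg$ with $k>0$ forces finite weighted volume and hence $f$-stochastic completeness --- but they apply to functions bounded above, so running them on $u=-R$ requires $\inf_N R>-\infty$ \emph{a priori}, which is precisely what is missing. Nor can this be repaired by invoking the quadratic growth of $f$ (Theorem~\ref{CZ1} of this paper): the Cao--Zhou potential estimates are themselves proved using Chen's $R\geq 0$, so appealing to them would be circular, and Hamilton's identity $R+|\nabla f|^2=2kf+\lambda$ yields only an upper bound for $R$ absent a lower bound for $f$. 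In summary: right identities, complete compact case, and a correct diagnosis of where the difficulty sits, but the noncompact step --- which \emph{is} the theorem --- is flagged rather than proved; to close it you would need Chen's localization argument (or an independent a priori lower bound for $R$ on shrinkers).
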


\begin{remark} By using the maximum principle, Pigola, Rimoldi, and Setti  \cite{prs} proved that a Ricci shrinker has positive scalar curvature unless it is isometric to Euclidean space.
\end{remark}

\begin{theorem}[Hamilton, \cite{hamilton}]\label{hamilton} If $(N,g,f)$ is a Ricci shrinker satisfying (\ref{RS0}) then $R+|\nabla f|^2-2kf$ is a constant function.
\end{theorem}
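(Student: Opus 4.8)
The plan is to derive the classical soliton identities by differentiating the defining equation \eqref{RS0} and then to show that the differential of $R+|\nabla f|^2-2kf$ vanishes identically. Throughout I work on the connected manifold $N$ in index notation, with the sign conventions under which the contracted second Bianchi identity reads $\nabla^iR_{ij}=\tfrac12\nabla_jR$ and Bochner's commutation formula for a function takes the form $\nabla^i\nabla_i\nabla_j f=\nabla_j\Delta f+R_{jl}\nabla^l f$.

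First I would record the two immediate consequences of the soliton equation $R_{ij}+\nabla_i\nabla_j f=kg_{ij}$. Tracing with $g^{ij}$ gives the scalar relation $R+\Delta f=nk$, where $n=\dim N$; differentiating this yields $\nabla_j\Delta f=-\nabla_j R$. These are purely algebraic once the defining equation is written out.

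The key step is to take the divergence of the soliton equation. Applying $\nabla^i$ to $R_{ij}+\nabla_i\nabla_j f=kg_{ij}$ and using $\nabla^iR_{ij}=\tfrac12\nabla_jR$ together with the commutation formula for the Hessian, I obtain $\tfrac12\nabla_jR+\nabla_j\Delta f+R_{jl}\nabla^l f=0$. Substituting $\nabla_j\Delta f=-\nabla_jR$ from the trace relation collapses this to the fundamental identity
\[
\nabla_jR=2R_{jl}\nabla^l f,
\]
that is, $\nabla R=2\,\mathrm{Ric}(\nabla f)$. I expect this computation to be the only delicate point: one must let the covariant derivatives of $\nabla_j f$ commute correctly, producing exactly the Ricci term, and keep the Bianchi contraction consistent with the chosen curvature convention. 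Everything surrounding it is bookkeeping.

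Finally I would differentiate the target function directly. Using the symmetry of the Hessian and the soliton equation in the form $\nabla_l\nabla_j f=kg_{lj}-R_{lj}$, one computes $\nabla_j|\nabla f|^2=2\nabla^lf\,\nabla_l\nabla_jf=2k\nabla_jf-2R_{jl}\nabla^lf$. Hence
\[
\nabla_j\bigl(R+|\nabla f|^2-2kf\bigr)=\nabla_jR+\bigl(2k\nabla_jf-2R_{jl}\nabla^lf\bigr)-2k\nabla_jf=\nabla_jR-2R_{jl}\nabla^lf,
\]
which vanishes by the fundamental identity obtained above. Since $N$ is connected, a function with vanishing gradient is constant, and this proves that $R+|\nabla f|^2-2kf$ is a constant function.
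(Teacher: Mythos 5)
Your proposal is correct, and it is essentially the classical argument of Hamilton that the paper simply cites for this theorem: trace the soliton equation to get $R+\Delta f=nk$, combine the contracted second Bianchi identity with the commutation formula $\Delta\nabla_j f=\nabla_j\Delta f+R_{jl}\nabla^l f$ to obtain $\nabla_j R=2R_{jl}\nabla^l f$, and then check that the gradient of $R+|\nabla f|^2-2kf$ vanishes on the connected manifold $N$. All the index computations, including the sign conventions at the delicate commutation step, are consistent, so there is nothing to fix.
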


The following theorem gives us a precise estimate of the growth of potential functions of noncompact Ricci shrinkers.

\begin{theorem}[Cao-Zhou, \cite{caozhou}]\label{CZ1} Let $(N,g,f)$ be a noncompact Ricci shrinker satisfying (\ref{RS}) such that $f=|\nabla f|^2+R$. Fix a point $p\in N$ and consider the distance function $r(x)=d(x,p)$, $x\in N$. Then
$$\frac{1}{4}(r(x)-c)^2\leq f(x)\leq \frac{1}{4}(r(x)+c)^2$$
\noindent for every $x\in N$ with $r(x)$ sufficiently large, where $c$ is a positive constant depending only on $n$ and the geometry of $g$ on the unit ball $B_1(p)$.
\end{theorem}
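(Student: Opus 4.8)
The plan is to treat the two inequalities separately: the upper bound comes from an elementary gradient estimate, while the lower bound is the substantive half and will be extracted from the second variation of arc length along minimizing geodesics together with the soliton equation. Throughout I use only what is already available: Chen's theorem (Theorem \ref{chen}) gives $R\ge 0$, and the assumed normalization $f=|\nabla f|^2+R$ then yields at once both $f\ge 0$ and the gradient bound $|\nabla f|^2\le f$ on all of $N$.

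For the upper bound I would work with $\rho:=2\sqrt{f}$ on the region $\{f>0\}$, where $|\nabla\rho|=|\nabla f|/\sqrt{f}\le 1$. Integrating this inequality along a unit-speed minimal geodesic from $p$ to $x$ gives $\rho(x)\le\rho(p)+r(x)$, that is $2\sqrt{f(x)}\le 2\sqrt{f(p)}+r(x)$; squaring produces $f(x)\le\tfrac14\bigl(r(x)+c\bigr)^2$ with $c=2\sqrt{f(p)}$. This half in fact holds for every $x$, with no largeness assumption on $r(x)$.

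For the lower bound, fix a unit-speed minimizing geodesic $\gamma:[0,s_0]\to N$ from $p$ to $x$, with $s_0=r(x)$, and set $\phi(s)=f(\gamma(s))$. Tracing the soliton equation $\Rc+\nabla^2 f=\tfrac12 g$ in the direction $\gamma'$, and using $\nabla_{\gamma'}\gamma'=0$, gives the scalar identity
\[
\phi''(s)=\tfrac12-\Rc\bigl(\gamma'(s),\gamma'(s)\bigr),
\]
hence $\phi'(s)=\phi'(0)+\tfrac{s}{2}-\int_0^s\Rc(\gamma',\gamma')\,dt$, where $|\phi'(0)|\le|\nabla f(p)|$. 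The whole problem therefore reduces to the uniform upper estimate $\int_0^{s}\Rc(\gamma',\gamma')\,dt\le c_1$ for all $s\le s_0$: this forces $\phi'(s)\ge\tfrac{s}{2}-c_2$ for $s$ large, and a second integration then yields $\phi(s_0)\ge\tfrac14(s_0-c)^2$, which is precisely $f(x)\ge\tfrac14(r(x)-c)^2$.

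To obtain the uniform bound on $\int\Rc(\gamma',\gamma')$ I would invoke the nonnegativity of the index form of the minimizing geodesic $\gamma$: for any Lipschitz $\psi$ with $\psi(0)=\psi(s_0)=0$ and a parallel orthonormal frame $\{e_i\}_{i=1}^{n-1}$ along $\gamma$ orthogonal to $\gamma'$, the variations $V_i=\psi e_i$ satisfy $\sum_i I(V_i,V_i)\ge 0$, which upon summation reads
\[
\int_0^{s_0}\psi^2\,\Rc(\gamma',\gamma')\,ds\le (n-1)\int_0^{s_0}(\psi')^2\,ds .
\]
Choosing $\psi$ to be the tent function equal to $s$ on $[0,1]$, to $1$ on $[1,s_0-1]$, and to $s_0-s$ on $[s_0-1,s_0]$ makes the right-hand side equal to $2(n-1)$ and reduces everything to controlling the two end contributions where $\psi<1$. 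The near-$p$ piece on $[0,1]$ is bounded in terms of the geometry of $g$ on the unit ball $B_1(p)$, which is exactly the source of the dependence of $c$ in the statement. I expect the main obstacle to be the far end on $[s_0-1,s_0]$, where no a priori curvature bound is available: there I would substitute $\Rc(\gamma',\gamma')=\tfrac12-\phi''$ and integrate by parts twice against the polynomial weight, producing boundary contributions built from $\phi'(s_0-1)$, $\phi(s_0-1)$ and $\int_{s_0-1}^{s_0}\phi$. The delicate point is that each of these is individually of size $O(s_0^2)$ or $O(s_0)$, but their leading parts cancel (as one checks directly on the Gaussian model $\phi=\tfrac14 s^2$), leaving only a bounded remainder; making this cancellation rigorous, using the gradient bound $|\phi'|=|\langle\nabla f,\gamma'\rangle|\le\sqrt{f}$ and the upper bound on $f$ just proved to estimate the remainder uniformly in $s_0$, is the crux of the whole argument. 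Once $\int_0^{s}\Rc(\gamma',\gamma')\,dt\le c_1$ is established, the two integrations above complete the proof.
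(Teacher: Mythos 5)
The paper does not actually prove this theorem; it imports it from Cao--Zhou \cite{caozhou}, so your proposal must be measured against the original argument there. Your upper-bound half is exactly that argument (Chen's $R\ge 0$ plus the normalization give $|\nabla f|^2\le f$, hence $\sqrt f$ is $\tfrac12$-Lipschitz) and is complete; your lower-bound half assembles the right tools (second variation along a minimizing geodesic, the tent test function, the identity $\phi''=\tfrac12-\textrm{Ric}(\gamma',\gamma')$), but the reduction you build on them has a genuine gap.

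The gap is your claim that everything reduces to the uniform estimate $\int_0^{s}\textrm{Ric}(\gamma',\gamma')\,dt\le c_1$, to be proved via an internal cancellation in the far-end term. First, a bookkeeping error that matters: since $1-\psi^2$ equals $1$ at $s=s_0$ (and $0$ at $s_0-1$), any integration by parts of $\int_{s_0-1}^{s_0}(1-\psi^2)\phi''\,ds$ necessarily produces the boundary term $\phi'(s_0)$, not $\phi'(s_0-1)$; one gets exactly
\begin{equation*}
\int_{s_0-1}^{s_0}(1-\psi^2)\,\textrm{Ric}(\gamma',\gamma')\,ds=\tfrac13-\phi'(s_0)+2\int_{s_0-1}^{s_0}(s_0-s)\,\phi'(s)\,ds.
\end{equation*}
With only the tools you allow yourself ($|\phi'|\le\sqrt{\phi}$ and $\sqrt{\phi(s)}\le\tfrac12(s+c)$), each of the two terms on the right is of size $\sim s_0/2$ and there is no provable cancellation between them: on the Gaussian they cancel only because there $\phi'(s_0)=\tfrac{s_0}{2}$, and a priori you have no lower bound on $\phi'(s_0)$ better than $-\sqrt{f(x)}$. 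Indeed, Ricci curvature concentrated in the last unit interval is invisible to the index form, because the test field must vanish at the endpoint $s_0$; such a concentration can drive $\phi'(s_0)$ down without contradicting anything you have established. Proving $\phi'(s_0)\ge\tfrac{s_0}{2}-C$ is essentially equivalent to the theorem you are trying to prove, so the step is circular, and the intermediate goal $\int_0^{s}\textrm{Ric}\le c_1$ is both unobtainable by this route and stronger than needed.

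The correct completion --- and the one in \cite{caozhou} --- never bounds $\int\textrm{Ric}$ at all. Combine the index-form inequality, split as you propose, with the exact identity $\phi'(s_0)-\phi'(0)=\tfrac{s_0}{2}-\int_0^{s_0}\textrm{Ric}(\gamma',\gamma')\,ds$: the dangerous boundary term $\phi'(s_0)$ then appears on both sides and cancels, leaving
\begin{equation*}
\tfrac{s_0}{2}+\phi'(0)\le c_1+2\int_{s_0-1}^{s_0}(s_0-s)\,\phi'(s)\,ds,
\end{equation*}
with $c_1$ depending only on $n$ and $\max_{B_1(p)}|\textrm{Ric}|$. Now a single application of the already-proved Lipschitz bound, $\phi'(s)\le\sqrt{\phi(s)}\le\sqrt{f(x)}+\tfrac{s_0-s}{2}$ on $[s_0-1,s_0]$, gives $2\int_{s_0-1}^{s_0}(s_0-s)\phi'\le\sqrt{f(x)}+\tfrac12$, hence $\sqrt{f(x)}\ge\tfrac{s_0}{2}-c$ directly --- no uniform Ricci-integral bound and no second integration of $\phi''$ is needed. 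With this replacement of your far-end analysis, the rest of your outline coincides with the original proof.
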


\begin{remark}\label{r1} By the previous theorem $f(x) \to +\infty$ when $r(x)\to +\infty$. In particular, the potential function of a Ricci shrinker attains a global minimum.
\end{remark}

The following theorem tells us that noncompact Ricci shrinkers have at most Euclidean volume growth. This result can be viewed as an analogous of the well-known theorem of Bishop that a complete non-compact Riemannian manifold with non-negative Ricci curvature has at most Euclidean volume growth.

\begin{theorem}[Cao-Zhou, \cite{caozhou}]\label{CZ2}  Let $(N,g,f)$ be a noncompact Ricci shrinker. Then, there exists some positive constant $C>0$ such that
$$\textrm{Vol}(B_p(r))\leq Cr^n$$
\noindent for $r>0$ sufficiently large.
\end{theorem}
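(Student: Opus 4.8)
The plan is to reduce the volume growth of geodesic balls to the growth of the sublevel sets of the potential $f$, and then to bound the latter by deriving a first-order differential identity for the volume function and integrating it. Since the conclusion is scale invariant (rescaling $g$ only changes the constant $C$), I normalize $k=\frac12$ as in \eqref{RS}, and by Theorem \ref{hamilton} I may add a constant to $f$ so that $R+|\nabla f|^2=f$; in particular $f\ge 0$ and, by Remark \ref{r1}, $f$ is proper. By the upper estimate of Theorem \ref{CZ1}, every $x$ with $r(x)=d(x,p)$ large satisfies $f(x)\le\frac14(r(x)+c)^2$, so for $s$ large the ball $B_p(s)$ is contained in the sublevel set $D(t):=\{f<t\}$ with $t=\frac14(s+c)^2$. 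Hence it suffices to prove $\Vol(D(t))\le C\,t^{n/2}$ for $t$ large, since then $\Vol(B_p(s))\le \Vol\big(D(\tfrac14(s+c)^2)\big)\le C'(s+c)^n\le C''s^n$.

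Next I would set $V(t)=\Vol(D(t))$ and $\chi(t)=\int_{D(t)}R\,dv$, and relate them. Tracing \eqref{RS} gives $\Delta f=\frac n2-R$, so the divergence theorem on $D(t)$ (whose outward normal on the regular level set $\{f=t\}$ is $\nabla f/|\nabla f|$) yields the boundary identity $\int_{\{f=t\}}|\nabla f|\,dA=\frac n2 V(t)-\chi(t)$. By the coarea formula, $V$ and $\chi$ are absolutely continuous with $V'(t)=\int_{\{f=t\}}|\nabla f|^{-1}\,dA$ and $\chi'(t)=\int_{\{f=t\}}R\,|\nabla f|^{-1}\,dA$ for a.e.\ $t$. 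Using $|\nabla f|^2=f-R=t-R$ on $\{f=t\}$, I substitute $R=t-|\nabla f|^2$ inside $\chi'(t)$ to obtain the key identity
\begin{equation*}
\chi'(t)-\chi(t)=t\,V'(t)-\frac n2 V(t)\qquad\text{for a.e. }t.
\end{equation*}

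To finish, I would invoke Chen's Theorem \ref{chen} ($R\ge 0$), which gives both $\chi\ge 0$ and, since $\int_{\{f=t\}}|\nabla f|\,dA=\frac n2 V(t)-\chi(t)\ge 0$, also $\chi(t)\le\frac n2 V(t)$. Setting $g(t)=t^{-n/2}V(t)$ and $\psi(t)=t^{-n/2-1}\chi(t)\ge 0$, the key identity becomes $g'(t)=\psi'(t)+\big(\frac{n/2+1}{t}-1\big)\psi(t)$, so for $t>\frac n2+1$ the last term is $\le 0$ and $g'(t)\le\psi'(t)$. Integrating from a fixed $t_0>\frac n2+1$ gives $g(t)\le g(t_0)+\psi(t)\le g(t_0)+\frac{n}{2t}\,g(t)$, where the final step uses $\chi(t)\le\frac n2 V(t)$; hence $g(t)\le 2g(t_0)$ once $t\ge n$. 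This yields $V(t)\le 2g(t_0)\,t^{n/2}$ and completes the reduction set up in the first paragraph.

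The \emph{main obstacle} is the derivation of the key identity and making the coarea and divergence computations rigorous: one works with regular values of $f$, which by Sard's theorem form a full-measure set, and uses that $f$ is proper with $\nabla f\ne 0$ on distant level sets (guaranteed by the growth estimate of Theorem \ref{CZ1}). The indispensable structural input is Chen's nonnegativity $R\ge 0$, which simultaneously forces $\chi\ge 0$ and $\chi\le\frac n2 V$; without it the term $\psi$ could not be discarded and the polynomial bound would fail. By contrast, the passage from sublevel sets to geodesic balls via Theorem \ref{CZ1} is routine.
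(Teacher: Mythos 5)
Your proof is correct, and since the paper does not prove Theorem \ref{CZ2} (it is quoted from \cite{caozhou}), the right comparison is with Cao--Zhou's original argument, which yours reconstructs essentially verbatim: the same reduction to sublevel sets of $f$ via Theorem \ref{CZ1}, the same key identity $\chi'(t)-\chi(t)=tV'(t)-\frac{n}{2}V(t)$ obtained from the traced soliton equation $\Delta f=\frac{n}{2}-R$, Hamilton's identity $|\nabla f|^2=f-R$, and the divergence/coarea formulas (Cao--Zhou write it in the variable $\rho=2\sqrt{f}$ as $nV(r)-rV'(r)=2\chi(r)-\frac{4}{r}\chi'(r)$, a trivial reparametrization of yours), followed by the same double use of Chen's $R\ge 0$ to get $0\le\chi(t)\le\frac{n}{2}V(t)$ and absorb the $\chi$-terms for $t$ large. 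One minor inaccuracy: Theorem \ref{CZ1} does \emph{not} guarantee $\nabla f\neq 0$ on distant level sets (no such pointwise nonvanishing is available from the growth bounds alone), but this is harmless because your appeal to Sard's theorem --- working only with regular values $t$, which form a full-measure set, together with the local absolute continuity of $V$ and $\chi$ --- is all the integration step actually requires.
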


\begin{remark}\label{r2} It follows from Theorems \ref{CZ1} and \ref{CZ2} that if $(N,g,f)$ is a Ricci shrinker then the measure induced by the weighted volume element $e^{-f}dv$ is a finite measure, i.e.,
$$\displaystyle\int_N e^{-f}dv<\infty.$$ 
In particular, a Ricci shrinker has finite fundamental group.
\end{remark}

\begin{remark}  The finiteness of weighted volume plays a important role in the study of spectrum of Laplacian and properness of immersed submanifolds \cite{CZ2013}. In \cite{CVZ2021}, it has been generalized to a quite more general cases.

\end{remark}

Let $(\overline{M},g,f)$ be an $m$-dimensional complete smooth metric measure space and $M$ be an $n$-dimensional submanifold in $\overline{M}$ ($n<m$). The function $f$ restricted to $M$ is also denoted by $f$. We denote by $dv$ and $d\sigma$ the volume elements of $(\overline{M},g)$ and $(M,g)$, respectively. Furthermore, we denote by $\nu$ the measure on $\overline{M}$ induced by the weighted volume element $e^{-f}dv$ and by $\mu$ the measure on $M$ induced by the weighted volume element $e^{-f}d\sigma$. In this paper, unless otherwise specified, the notations with a bar denote the quantities on $(\overline{M},g)$, while the notations without a bar denote the quantities on $(M,g)$.

The {\it second fundamental form} $B$ of $M$ is defined by 
$$B(X,Y)=(\overline{\nabla}_XY)^{\perp}, \ \ X,Y\in T_pM, \ \ p\in M,$$
\noindent where $\perp$ denotes the projection onto the normal bundle of $M$. 

The {\it mean curvature vector} $\vec{H}$ of $M$ is defined by
 $$\vec{H}= \textrm{tr} \ B= \sum_{i=1}^n (\overline{\nabla}_{e_i}e_i)^{\perp}$$
 \noindent where $\{e_1,e_2, \cdots, e_n\}$ is a local orthonormal frame on $M$.
 
 The {\it weighted mean curvature vector} $\vec{H}_f$ of $M$ is defined by
 $$\vec{H}_f=\vec{H}+(\overline{\nabla} f)^{\perp}.$$

 In the case of hypersurfaces, the mean curvature $H$ of $M$ is defined as $\vec{H}=-H\eta$,  where $\eta$ is the unit normal field on $M$. In the same way, the weighted mean curvature $H_f$ of $M$ is defined as $\vec{H}_f=-H_f\eta$. 
 
 The submanifold $M$ is called {\it $f$-minimal} if its weighted mean curvature vector $\vec{H}_f$ vanishes identically. 

\begin{remark} It is well known that an $f$-minimal submanifold is a critical point of the weighted volume functional
    $$V_f(S)=\int_S e^{-f}dv$$
\noindent where $S\subset \overline{M}$ is a measurable subset. Furthermore, it is a minimal submanifold in $\overline{M}$ equipped with the conformal metric $\overline{g}=e^{\frac{-2f}{n}}g$. \end{remark}

By a direct calculation, we obtain that for any $\Phi\in C^2(\overline{M})$, the following equations hold:
\begin{equation}\label{eq}
\overline{\Delta}\Phi=\Delta \Phi-\langle \vec{H}_f,(\overline{\nabla}\Phi)^{\perp}\rangle + \langle(\overline{\nabla}f)^{\perp}, (\overline{\nabla}\Phi)^{\perp} \rangle + \textrm{tr}_{M^{\perp}} \overline{\nabla}^2\Phi. 
\end{equation}
\noindent and
\begin{equation}\label{eq1}
\overline{\Delta}_f\Phi=\Delta_f\Phi-\langle\vec{H}_f, \overline{\nabla}\Phi\rangle+ \textrm{tr}_{M^{\perp}} \overline{\nabla}^2\Phi. 
\end{equation}
\noindent where $\textrm{tr}_{M^{\perp}} \overline{\nabla}^2\Phi$ denotes the trace of Hessian of $\Phi$ on $\overline{M}$ restricted on the normal bundle of $M$.

\section{The discreteness of spectrum of $\Delta_f$}\label{sec3}

\medskip
In \cite{XCDZ} it has been proved that the drifted Laplacian on any properly immersed self-shrinker for mean curvature flow has a discrete spectrum. More precisely, 
\begin{proposition}
 Let $M$ be a properly immersed self-shrinker in $\mathbb{R}^{m}$.  Then the drifted Laplacian $\mathcal{L}$  with domain $L^2(M,e^{-\frac{|x|^2}{4}}d\sigma)$ has discrete spectrum.
\end{proposition}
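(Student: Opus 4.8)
The plan is to show that the essential spectrum of $\mathcal{L}$ is empty; since $\mathcal{L}$ is a nonnegative self-adjoint operator on $L^2(M,e^{-f}d\sigma)$ with $f=\tfrac{|x|^2}{4}$, this is equivalent to discreteness of the spectrum. By the Persson-type characterization of the bottom of the essential spectrum (equivalently, the decomposition principle), it suffices to produce, for every $A>0$, a compact set $K\subset M$ such that the Poincar\'e-type inequality
\[
\int_M |\nabla\phi|^2\, e^{-f}\,d\sigma \;\geq\; A\int_M \phi^2\, e^{-f}\,d\sigma
\]
holds for all $\phi\in C^\infty_c(M\setminus K)$. Indeed, this forces $\inf\sigma_{\mathrm{ess}}(\mathcal{L})\geq A$ for every $A$, hence $\sigma_{\mathrm{ess}}(\mathcal{L})=\emptyset$ and the spectrum is discrete.

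The engine is the identity $\mathcal{L}|x|^2 = 2n - |x|^2$, where $n=\dim M$, which I would derive from the self-shrinker equation $\vec{H} = -\tfrac12 x^\perp$ together with $\Delta|x|^2 = 2n + 2\langle\vec{H}, x\rangle$ and $\nabla|x|^2 = 2x^\top$. Testing this identity against $\phi^2$ and using that $\mathcal{L}$ is self-adjoint with respect to $e^{-f}d\sigma$ gives, after integration by parts,
\[
\int_M \phi^2(2n-|x|^2)\,e^{-f}\,d\sigma = -4\int_M \phi\,\langle\nabla\phi, x\rangle\, e^{-f}\,d\sigma .
\]
Estimating the cross term by Young's inequality, $4|\phi|\,|x|\,|\nabla\phi|\leq \tfrac12\phi^2|x|^2 + 8|\nabla\phi|^2$, and absorbing the $\phi^2|x|^2$ contribution on the left, yields the weighted Poincar\'e inequality
\[
\int_M \phi^2|x|^2\, e^{-f}\,d\sigma \;\leq\; 4n\int_M \phi^2\, e^{-f}\,d\sigma + 16\int_M |\nabla\phi|^2\, e^{-f}\,d\sigma
\]
for all $\phi\in C^\infty_c(M)$.

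To finish, I would localize at infinity. If $\phi$ is supported where $|x|\geq R$, then $\int_M \phi^2|x|^2 e^{-f}d\sigma\geq R^2\int_M\phi^2 e^{-f}d\sigma$, and the previous inequality rearranges to $\int_M|\nabla\phi|^2 e^{-f}d\sigma\geq \tfrac{R^2-4n}{16}\int_M\phi^2 e^{-f}d\sigma$. Here is where properness enters decisively: since $M$ is properly immersed, $K_R := M\cap \overline{B_R(0)}$ is compact, so any $\phi\in C^\infty_c(M\setminus K_R)$ is supported in $\{|x|\geq R\}$. Choosing $R$ with $\tfrac{R^2-4n}{16}\geq A$ supplies the required compact set, and letting $R\to\infty$ gives $\sigma_{\mathrm{ess}}(\mathcal{L})=\emptyset$. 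I expect the only genuine subtlety to be the functional-analytic step, namely rigorously justifying the Persson characterization (or decomposition principle) for the drifted Laplacian on the noncompact manifold $M$, together with the self-adjointness of $\mathcal{L}$ on $C^\infty_c(M)$; the geometric heart, the identity $\mathcal{L}|x|^2=2n-|x|^2$ and the ensuing weighted Poincar\'e inequality, is an elementary computation. The finiteness of the weighted volume and the at most Euclidean volume growth of proper self-shrinkers ensure the integrability needed to legitimize the integrations by parts above.
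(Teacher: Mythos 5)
Your proof is correct, but it runs along a genuinely different track from the paper's. The paper obtains this proposition as a special case of its general Theorem \ref{tpdis}: there one conjugates $\Delta_f$ by the unitary map $U(u)=ue^{f/2}$ into the Schr\"odinger operator $L=\Delta+\tfrac12\Delta f-\tfrac14|\nabla f|^2$ acting on the unweighted $L^2(M)$, and discreteness is reduced, via the criterion cited from Reed--Simon, to showing that the effective potential $\tfrac14|\nabla f|^2-\tfrac12\Delta f$ tends to $+\infty$ along $M$; this is verified with the submanifold identity (\ref{eq}), Hamilton's identity, and properness. You instead stay with the Dirichlet form of $\mathcal{L}$ itself: the drift identity $\mathcal{L}|x|^2=2n-|x|^2$, tested against $\phi^2$ and combined with Young's inequality, yields a weighted Poincar\'e inequality on the ends $\{|x|\ge R\}$, and Persson's characterization of $\inf\sigma_{\mathrm{ess}}$ then forces the essential spectrum to be empty. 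The geometric content of the two arguments is essentially the same --- in the Gaussian case the paper's potential computation reduces to your identity, since $f=|x|^2/4$ --- but the spectral-theoretic wrappers differ: you need Persson's theorem (the decomposition principle) and essential self-adjointness for the drifted Laplacian, both of which do hold here because a proper immersion has complete induced metric, a point worth stating explicitly since it underlies the step you flagged as the ``only genuine subtlety''; the paper needs the ground-state transform plus the Reed--Simon criterion. What your route buys is a self-contained, elementary argument tailored to self-shrinkers, working in any codimension and requiring no conjugation trick (note also that your worry about integrability in the integrations by parts is vacuous, since the test functions are compactly supported). What the paper's route buys is generality: the same conjugation argument proves discreteness for properly immersed submanifolds of arbitrary codimension in any Ricci shrinker with bounded weighted mean curvature, of which both this proposition and the $\lambda$-hypersurface corollary are immediate special cases, whereas your use of the exact identity $\mathcal{L}|x|^2=2n-|x|^2$ ties the argument to the Gaussian soliton.
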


We will show that the discreteness of the drifted Laplacian holds for more general cases. For example,  
\begin{proposition}
 Let $M$ be a properly immersed $\lambda-$hypersurface in $\mathbb{R}^{n+1}$.    Then the drifted Laplacian $\mathcal{L}$  with domain $L^2(M,e^{-\frac{|x|^2}{4}}d\sigma)$ has discrete spectrum.
\end{proposition}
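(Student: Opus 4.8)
The plan is to derive this statement as a direct specialization of the general discreteness result, Theorem \ref{p3} (equivalently its submanifold version Theorem \ref{tpdis}), to the Gaussian shrinker. First I would record the ambient setup: a $\lambda$-hypersurface is, by definition, a hypersurface of constant weighted mean curvature $H_f\equiv\lambda$ in the Gaussian shrinker $(\mathbb{R}^{n+1},g_{can},f)$ with $f(x)=\frac{|x|^2}{4}$. This is a Ricci shrinker satisfying $\overline{\textrm{Ric}}_f=\frac{1}{2}g$, so that $k=\frac{1}{2}$. Thus $M$ is a CWMC properly immersed hypersurface in a Ricci shrinker, and the only remaining task is to check the hypothesis on the potential function with an admissible $\epsilon\in[0,2k)=[0,1)$.

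Second, I would compute the two curvature quantities appearing in that hypothesis. Since the ambient metric is flat, the scalar curvature is $\overline{R}\equiv 0$. Since $f=\frac{|x|^2}{4}$, its Euclidean Hessian is $\overline{\nabla}^2 f=\frac{1}{2}g$. The normal bundle of a hypersurface is one-dimensional, spanned by the unit normal $\eta$, so
$$\textrm{tr}_{M^{\perp}}\overline{\nabla}^2 f=\overline{\nabla}^2 f(\eta,\eta)=\tfrac{1}{2}|\eta|^2=\tfrac{1}{2}.$$
Consequently $\overline{R}+2\,\textrm{tr}_{M^{\perp}}\overline{\nabla}^2 f=1$ identically on $M$.

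Third, with $\epsilon=0$ the required inequality reads $\overline{R}+2\,\textrm{tr}_{M^{\perp}}\overline{\nabla}^2 f\geq 0$, which holds everywhere (the left-hand side equals $1$), in particular outside any compact set; and $0\leq\epsilon=0<1=2k$. Hence all the hypotheses of Theorem \ref{p3} are met, and it yields at once that the spectrum of $\mathcal{L}=\Delta_f$ with domain $L^2(M,e^{-|x|^2/4}d\sigma)$ is discrete.

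I do not anticipate a genuine obstacle here: the entire content lies in Theorem \ref{p3}, and the present statement is merely a verification that the Gaussian shrinker meets its assumptions with the optimal choice $\epsilon=0$. The only point worth flagging is that one uses the constancy (not merely boundedness) of $H_f$ implicitly through the CWMC hypothesis of Theorem \ref{p3}; but since $\lambda$ is constant by the definition of a $\lambda$-hypersurface, and a constant weighted mean curvature in particular has bounded norm $|\vec{H}_f|=|\lambda|$, this causes no difficulty and also places the statement squarely within the bounded-weighted-mean-curvature framework of Theorem \ref{tpdis}.
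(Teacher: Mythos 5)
Your proposal is correct and is exactly the paper's intended argument: the paper states this proposition as a corollary of the general discreteness result (Theorem \ref{tpdis}, equivalently Theorem \ref{p3}), and your verification that the Gaussian shrinker satisfies its hypotheses --- $k=\tfrac12$, $\overline{R}\equiv 0$, $\overline{\nabla}^2 f=\tfrac12 g$ so that $\overline{R}+2\,\textrm{tr}_{M^{\perp}}\overline{\nabla}^2 f=1\geq 0$ with $\epsilon=0$, and $|\vec{H}_f|=|\lambda|$ bounded so $\rho=0$, $C=\lambda^2$ --- is precisely the specialization the paper leaves implicit.
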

These results are corollaries of the following much more general results. 

\begin{theorem}\label{tpdis} Let $M$ be a properly immersed submanifold in a Ricci shrinker $(\overline{M},g,f)$ with $\overline{Ric}_f=kg$. If there are nonnegative constants $\epsilon$, $\rho$ and $C$ with $\epsilon+\rho<2k$ such that the trace of Hessian of $f$ on $\overline{M}$ restricted on the normal bundle of $M$ satisfies $\overline{R}+2\textrm{tr}_{M^{\perp}} \overline{\nabla}^2f\geq - \epsilon  f$ outside a compact set and the norm of weighted mean curvature vector $\vec{H}_f$ of $M$ is bounded above by $\sqrt{\rho f+C}$, then the spectrum of $\Delta_f$ with domain $L^2(M,\mu)$ is discrete.
\end{theorem}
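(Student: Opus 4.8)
The plan is to use the standard spectral criterion that the spectrum of $-\Delta_f$ on $L^2(M,\mu)$ is discrete if and only if, for some (equivalently, any) exhaustion of $M$ by relatively compact open sets $\Omega_k$, the first Dirichlet eigenvalues of the exterior domains satisfy $\lambda_1(M\setminus\Omega_k)\to+\infty$, where
\[
\lambda_1(\Omega)=\inf\left\{\frac{\int_M|\nabla u|^2\,d\mu}{\int_M u^2\,d\mu}\ :\ u\in C_c^{\infty}(\Omega)\setminus\{0\}\right\}.
\]
This is the decomposition/Persson-type criterion used in \cite{XCDZ} and \cite{VZ2018}. Since $M$ is properly immersed, the sets $\Omega_k=M\cap B_p(k)$ form such an exhaustion, and by Theorem \ref{CZ1} together with Remark \ref{r1} the potential $f$ tends to $+\infty$ on $M\setminus\Omega_k$ as $k\to\infty$. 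Hence it suffices to produce a smooth auxiliary function $\phi$ on $M$ for which $-\Delta_f\phi-|\nabla\phi|^2$ tends to $+\infty$ where $f$ is large. The link between such a $\phi$ and the eigenvalue bound is a one-line integration by parts: for $u\in C_c^{\infty}(M)$,
\[
\int_M u^2(-\Delta_f\phi)\,d\mu=2\int_M u\,\langle\nabla u,\nabla\phi\rangle\,d\mu\leq\int_M|\nabla u|^2\,d\mu+\int_M u^2|\nabla\phi|^2\,d\mu,
\]
so that $\int_M u^2\big(-\Delta_f\phi-|\nabla\phi|^2\big)\,d\mu\leq\int_M|\nabla u|^2\,d\mu$. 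If $-\Delta_f\phi-|\nabla\phi|^2\geq\Lambda(k)$ on $M\setminus\Omega_k$ with $\Lambda(k)\to+\infty$, this forces $\lambda_1(M\setminus\Omega_k)\geq\Lambda(k)\to+\infty$, which is exactly what is needed.

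The heart of the matter is the explicit choice $\phi=f/2$ together with the computation of $\Delta_f f$ on $M$. Taking the trace of the soliton equation $\overline{\textrm{Ric}}_f=kg$ over $\overline{M}$ gives $\overline{\Delta}f=mk-\overline{R}$, where $m=\dim\overline{M}$, while Hamilton's identity (Theorem \ref{hamilton}) gives $|\overline{\nabla}f|^2=2kf-\overline{R}+\mathrm{const}$. Combining these, the scalar curvature cancels and $\overline{\Delta}_f f=-2kf+\mathrm{const}$. Restricting through equation (\ref{eq1}) then yields
\[
\Delta_f f=-2kf+\langle\vec{H}_f,\overline{\nabla}f\rangle-\textrm{tr}_{M^{\perp}}\overline{\nabla}^2 f+\mathrm{const}.
\]
Together with $|\nabla(f/2)|^2=\tfrac14\big(|\overline{\nabla}f|^2-|(\overline{\nabla}f)^{\perp}|^2\big)$, I would expand $-\Delta_f(f/2)-|\nabla(f/2)|^2$ and regroup the terms that are linear in $f$. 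The curvature contributions organize precisely into $\tfrac14\big(\overline{R}+2\,\textrm{tr}_{M^{\perp}}\overline{\nabla}^2 f\big)$, which is $\geq-\tfrac{\epsilon}{4}f$ by hypothesis, while the cross term is controlled by writing $-\tfrac12\langle\vec{H}_f,\overline{\nabla}f\rangle=-\tfrac12\langle\vec{H}_f,(\overline{\nabla}f)^{\perp}\rangle\geq-\tfrac14|\vec{H}_f|^2-\tfrac14|(\overline{\nabla}f)^{\perp}|^2\geq-\tfrac14(\rho f+C)-\tfrac14|(\overline{\nabla}f)^{\perp}|^2$, using the weighted mean curvature bound.

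The key point, and where I expect the bookkeeping to require care, is that the factor $1/2$ in $\phi=f/2$ is exactly what makes two cancellations occur simultaneously: it matches the curvature terms to the hypothesized combination $\overline{R}+2\,\textrm{tr}_{M^{\perp}}\overline{\nabla}^2 f$, and it arranges for the $+\tfrac14|(\overline{\nabla}f)^{\perp}|^2$ coming from $|\nabla(f/2)|^2$ to cancel the $-\tfrac14|(\overline{\nabla}f)^{\perp}|^2$ produced by the Young inequality on the mean-curvature cross term. After both cancellations one is left with
\[
-\Delta_f\!\left(\tfrac{f}{2}\right)-\left|\nabla\!\left(\tfrac{f}{2}\right)\right|^2\geq\tfrac14\,(2k-\epsilon-\rho)\,f-C'
\]
outside a compact set, for a suitable constant $C'$. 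The hypothesis $\epsilon+\rho<2k$ makes the coefficient strictly positive, so the right-hand side tends to $+\infty$ as $f\to+\infty$; on a region $M\setminus\Omega_k$ it is bounded below by a quantity $\Lambda(k)\to+\infty$. Feeding this lower bound into the integration-by-parts inequality of the first paragraph gives $\lambda_1(M\setminus\Omega_k)\geq\Lambda(k)\to+\infty$, and the spectral criterion then yields discreteness of the spectrum of $\Delta_f$. The two corollaries follow by specializing to the Gaussian shrinker, where $\overline{R}=0$, $\overline{\nabla}^2 f=\tfrac12 g$, and a $\lambda$-hypersurface has constant $|\vec{H}_f|$, so the hypotheses hold with $\epsilon=0$ and any small $\rho$.
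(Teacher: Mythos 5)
Your proposal is correct, and its computational heart is identical to the paper's: the quantity you bound from below, $-\Delta_f(f/2)-\bigl|\nabla (f/2)\bigr|^2$, equals $\tfrac14|\nabla f|^2-\tfrac12\Delta f$, which is exactly the Schr\"odinger potential the paper estimates, and you reach the same lower bound $\tfrac14(2k-\epsilon-\rho)f-C'$ from the same three ingredients (the traced soliton equation $\overline{\Delta}f=mk-\overline{R}$, Hamilton's identity of Theorem \ref{hamilton}, and the restriction formula (\ref{eq1})); your Young-inequality step is algebraically the same as the paper's completing of the square, which amounts to discarding $|\vec H|^2=|\vec H_f-(\overline{\nabla} f)^{\perp}|^2\ge 0$. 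The genuine difference is the spectral criterion wrapped around this computation. The paper conjugates $\Delta_f$ by the unitary map $U(u)=ue^{f/2}$ into the Schr\"odinger operator $L=\Delta+\tfrac12\Delta f-\tfrac14|\nabla f|^2$ on the unweighted $L^2(M)$ and invokes the classical fact (Reed--Simon \cite{RS1975}) that a potential tending to $+\infty$ forces discrete spectrum; you instead stay with the weighted Dirichlet form, derive the exterior eigenvalue bound $\lambda_1(M\setminus\Omega_k)\ge\Lambda(k)\to\infty$ by integrating by parts against $\phi=f/2$, and conclude via the decomposition (Persson) principle. The two finishes are equivalent in substance: yours avoids the ground-state transform and works directly with compactly supported test functions on $M$, but it requires Persson's theorem for the drifted Laplacian on a complete weighted manifold, which holds here because proper immersion into the complete $\overline{M}$ makes $M$ complete --- a point worth stating explicitly; the paper's route instead needs the Schr\"odinger criterion (stated in \cite{RS1975} for Euclidean space) transplanted to manifolds. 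One minor attribution slip: the decomposition criterion is not actually the mechanism of \cite{XCDZ} (which uses a compact embedding $H^1\subset\subset L^2$) nor of \cite{VZ2018} (which uses the same unitary conjugation as this paper), though it is standard and your use of it is correct.
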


\begin{proof} Let $U:L^2(M)\rightarrow L^2(M,\mu)$ be  the unitary isomorphism  given by $U(u)=ue^{\frac{f}{2}}$ and
the operator $L=\Delta+\frac{1}{2}\Delta f-\frac{1}{4}|\nabla f|^2$. A direct computation shows that $\Delta_f=ULU^{-1}$.  It follows that the operator $\Delta_f$ with domain $L^2(M,\mu)$ has a discrete spectrum if and only if the operator $L$ with domain $L^2(M)$ has a discrete spectrum. Using spectral theory to prove that $L$ has a discrete spectrum, it suffices to show that the function $\frac{1}{4}|\nabla f|^2-\frac{1}{2}\Delta f$ goes to infinity when $x\to \infty$ (e.g. \cite{RS1975} p. 120). By equation (\ref{eq}), we have
$$\Delta f=\overline{\Delta}f+\langle \vec{H}_f,(\overline{\nabla}f)^{\perp}\rangle - |(\overline{\nabla}f)^{\perp}|^2 - \textrm{tr}_{M^{\perp}} \overline{\nabla}^2f.$$

This implies that
$$|\nabla f|^2-2\Delta f =|\overline{\nabla}f|^2 -2\overline{\Delta}f-2\langle \vec{H}_f,(\overline{\nabla}f)^{\perp}\rangle +|(\overline{\nabla}f)^{\perp}|^2 +2\textrm{tr}_{M^{\perp}} \overline{\nabla}^2f.$$

Since $\overline{\textrm{Ric}}_f=kg$ we obtain $\overline{\Delta} f+\overline{R}=km$, where $m=\dim \overline{M}$. Furthermore, by Theorem \ref{hamilton}, there is a constant $\lambda$ such that $|\overline{\nabla}f|^2+\overline{R}=2kf+\lambda$. It follows that
\begin{eqnarray*}
|\nabla f|^2-2\Delta f &=& 2kf+\overline{R}-2km+\lambda-2\langle \vec{H}_f,(\overline{\nabla}f)^{\perp}\rangle +|(\overline{\nabla}f)^{\perp}|^2\\
& & +2\textrm{tr}_{M^{\perp}} \overline{\nabla}^2f\\
&=& 2kf+\overline{R}-2km+\lambda+|\vec{H}_f-(\overline{\nabla}f)^{\perp}|^2 -|\vec{H}_f|^2 \\
& & +2\textrm{tr}_{M^{\perp}} \overline{\nabla}^2f.\\
&=& 2kf-2km+\lambda-|\vec{H}_f|^2 +|\vec{H}|^2 +\overline{R}+2\textrm{tr}_{M^{\perp}} \overline{\nabla}^2f.
\end{eqnarray*}

By hypothesis, we have 
\begin{equation}\label{ineq1}
\frac{1}{4}|\nabla f|^2-\frac{1}{2}\Delta f \geq  \frac{(2k-\rho-\epsilon)f}{4}-\frac{(2km+C-\lambda)}{4}  
\end{equation}
\noindent outside a compact set. By Remark \ref{r1}, $f$ goes to infinity when $x\to \infty$ on $\overline{M}$. So, since $M$ is properly immersed in $\overline{M}$, it follows that the restriction of $f$ to $M$ goes to infinity when $x\to \infty$ on $M$. This and inequality (\ref{ineq1}) prove the result.
\end{proof}

\section{An integral inequality of the weighted Bochner formula}\label{sec4}

 In this section, we will prove an integral inequality of the weighted Bochner formula which we will apply in the following section to prove main Theorem \ref{tp}. This is similar to Reilly's formula for compact domains on Riemannian manifolds.

 Let $(\overline{M},g,f)$ be a complete smooth metric measure space and  $M$ be a properly embedded hypersurface in $\overline{M}$. Consider a domain $\Omega\subset\overline{M}$ such that $M=\partial \Omega$ and denote by $\eta$ the outward-pointing unit vector normal to $\partial\Omega$. Denote by $B$ the second fundamental form of $M$ with respect to $\eta$.

\begin{lemma}\label{prop1} If $\Phi \in H^1(\overline{M}, \nu)$ such that the restriction of $\Phi$ to $\Omega$ is smooth up to $\partial\Omega$ then 
\begin{align*}
  & \int_{\Omega}\psi^2\left[|\overline{\nabla}^2\Phi|^2+ 2\overline{\textrm{Ric}}_f(\overline{\nabla}\Phi,\overline{\nabla}\Phi) - 3\left(\bar{\Delta}_f\Phi\right)^2\right] d\nu & \\ 
  & \leq  8\int_{ \Omega}|\overline{\nabla}\Phi|^2|\overline{\nabla}\psi|^2 d\nu -4\int_M u\psi \langle \nabla\Phi,\nabla\psi\rangle d\mu & \\
  & \quad + 2\int_M \psi^2 B(\nabla\Phi,\nabla\Phi) d\mu+2\int_M \psi^2u\langle\vec{H}_f, \overline{\nabla}\Phi\rangle d\mu-4\int_M \psi^2u\Delta_f\Phi \ d\mu & \\
\end{align*}\noindent for every  $\psi\in C^{\infty}_0(\overline{M})$, where $u=\left.\langle \overline{\nabla}\Phi, \eta\rangle\right|_M$.
\end{lemma}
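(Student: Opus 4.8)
The plan is to derive the inequality from the weighted Bochner--Weitzenb\"ock identity
$$\tfrac12\overline{\Delta}_f|\overline{\nabla}\Phi|^2=|\overline{\nabla}^2\Phi|^2+\langle\overline{\nabla}\Phi,\overline{\nabla}\,\overline{\Delta}_f\Phi\rangle+\overline{\textrm{Ric}}_f(\overline{\nabla}\Phi,\overline{\nabla}\Phi),$$
which holds pointwise on $\Omega$ since $\Phi$ is smooth up to $\partial\Omega$ there, together with the weighted divergence theorem $\int_\Omega(\overline{\mathrm{div}}\,X-\langle\overline{\nabla}f,X\rangle)\,d\nu=\int_M\langle X,\eta\rangle\,d\mu$, obtained by applying the ordinary divergence theorem to $e^{-f}X$ on $\Omega$; every boundary contribution localizes to $M=\partial\Omega$ because $\psi\in C_0^\infty(\overline M)$. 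First I would multiply the Bochner identity by $2\psi^2$, integrate over $\Omega$ against $d\nu$, and integrate by parts twice: once applying the weighted Green identity to $\int_\Omega\psi^2\,\overline{\Delta}_f|\overline{\nabla}\Phi|^2$, and once moving the gradient off $\overline{\Delta}_f\Phi$ in $-2\int_\Omega\psi^2\langle\overline{\nabla}\Phi,\overline{\nabla}\,\overline{\Delta}_f\Phi\rangle$. After moving the resulting $2\int_\Omega\psi^2(\overline{\Delta}_f\Phi)^2$ to the left, this gives an \emph{exact} identity with left side $2\int_\Omega\psi^2|\overline{\nabla}^2\Phi|^2+2\int_\Omega\psi^2\overline{\textrm{Ric}}_f-2\int_\Omega\psi^2(\overline{\Delta}_f\Phi)^2$, interior cross terms $-4\int_\Omega\psi\,\overline{\nabla}^2\Phi(\overline{\nabla}\Phi,\overline{\nabla}\psi)$ and $4\int_\Omega\psi(\overline{\Delta}_f\Phi)\langle\overline{\nabla}\psi,\overline{\nabla}\Phi\rangle$, and the two boundary integrals $\int_M\psi^2\langle\overline{\nabla}|\overline{\nabla}\Phi|^2,\eta\rangle$ and $-2\int_M\psi^2(\overline{\Delta}_f\Phi)u$.

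To reach the target left-hand side $\int_\Omega\psi^2[\,|\overline{\nabla}^2\Phi|^2+2\overline{\textrm{Ric}}_f-3(\overline{\Delta}_f\Phi)^2\,]$ I would subtract $\int_\Omega\psi^2|\overline{\nabla}^2\Phi|^2+\int_\Omega\psi^2(\overline{\Delta}_f\Phi)^2$ from both sides of the identity, which produces the two spare negative terms $-\int_\Omega\psi^2|\overline{\nabla}^2\Phi|^2$ and $-\int_\Omega\psi^2(\overline{\Delta}_f\Phi)^2$ on the right. Each interior cross term is then absorbed into one of these negative squares by Young's inequality, in the forms $4|\psi|\,|\overline{\nabla}^2\Phi|\,|\overline{\nabla}\Phi|\,|\overline{\nabla}\psi|\le\psi^2|\overline{\nabla}^2\Phi|^2+4|\overline{\nabla}\Phi|^2|\overline{\nabla}\psi|^2$ and $4|\psi\,\overline{\Delta}_f\Phi|\,|\langle\overline{\nabla}\psi,\overline{\nabla}\Phi\rangle|\le\psi^2(\overline{\Delta}_f\Phi)^2+4|\overline{\nabla}\psi|^2|\overline{\nabla}\Phi|^2$. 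The Hessian and $(\overline{\Delta}_f\Phi)^2$ contributions cancel exactly, leaving precisely $8\int_\Omega|\overline{\nabla}\Phi|^2|\overline{\nabla}\psi|^2\,d\nu$, the first term on the right of the claim.

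It then remains to rewrite the two boundary integrals over $M$. Splitting $\overline{\nabla}\Phi=\nabla\Phi+u\eta$ and using $\langle\overline{\nabla}|\overline{\nabla}\Phi|^2,\eta\rangle=2\,\overline{\nabla}^2\Phi(\overline{\nabla}\Phi,\eta)$, I would invoke two pointwise identities on $M$: from the symmetry of the Hessian together with $\langle\overline{\nabla}_X\eta,\eta\rangle=0$ one obtains $\overline{\nabla}^2\Phi(\nabla\Phi,\eta)=B(\nabla\Phi,\nabla\Phi)+\langle\nabla u,\nabla\Phi\rangle$, while the hypersurface trace formula \eqref{eq1}, with $\textrm{tr}_{M^{\perp}}\overline{\nabla}^2\Phi=\overline{\nabla}^2\Phi(\eta,\eta)$, gives $\overline{\nabla}^2\Phi(\eta,\eta)=\overline{\Delta}_f\Phi-\Delta_f\Phi+\langle\vec H_f,\overline{\nabla}\Phi\rangle$. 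Substituting, the $\int_M\psi^2u\,\overline{\Delta}_f\Phi$ contributions cancel against $-2\int_M\psi^2(\overline{\Delta}_f\Phi)u$, and the remaining term $2\int_M\psi^2\langle\nabla u,\nabla\Phi\rangle$ is integrated by parts along $M$ with respect to $\mu$ (no boundary term, since $M$ is closed and $\psi$ is compactly supported), using $\textrm{div}_f^M(\psi^2\nabla\Phi)=\psi^2\Delta_f\Phi+2\psi\langle\nabla\psi,\nabla\Phi\rangle$. Collecting terms reproduces exactly the four boundary integrals $-4\int_M u\psi\langle\nabla\Phi,\nabla\psi\rangle$, $2\int_M\psi^2B(\nabla\Phi,\nabla\Phi)$, $2\int_M\psi^2u\langle\vec H_f,\overline{\nabla}\Phi\rangle$ and $-4\int_M\psi^2u\Delta_f\Phi$ of the statement.

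I expect the main obstacle to be the boundary bookkeeping in the last step: keeping the tangential/normal splitting, the sign conventions for $B$ and for the outward normal $\eta$, and the distinction between the ambient operator $\overline{\Delta}_f$ and the intrinsic $\Delta_f$ on $M$ all consistent, and verifying that the $\overline{\Delta}_f\Phi$ boundary terms cancel so that only the intrinsic $\Delta_f\Phi$ survives. A secondary, purely analytic point is that since $\Phi$ lies only in $H^1(\overline M,\nu)$ globally, every pointwise identity and integration by parts must be performed on $\Omega$, where $\Phi$ is smooth up to $\partial\Omega$, with the cutoff $\psi$ guaranteeing convergence of all integrals and the absence of contributions from outside a compact set.
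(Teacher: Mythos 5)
Your proposal is correct and follows essentially the same route as the paper: the weighted Bochner formula integrated over $\Omega$ against $\psi^2$, integration by parts via the weighted divergence theorem, Young's inequality to absorb the interior cross terms into the Hessian and Laplacian squares (yielding the same constant $8$), and the identical boundary analysis via $\overline{\nabla}\Phi=\nabla\Phi+u\eta$, the second fundamental form identity, equation \eqref{eq1}, and tangential integration by parts on $M$. The only difference is bookkeeping — you start from the non-divergence form of Bochner and integrate by parts twice, whereas the paper packages both steps into one application of the divergence theorem to $\frac12\overline{\nabla}|\overline{\nabla}\Phi|^2-\overline{\Delta}_f\Phi\,\overline{\nabla}\Phi$ — and this changes nothing of substance.
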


\begin{proof}
Consider $\psi\in C^{\infty}_0(\overline{M})$. Since $M$ is a properly embedded hypersurface in $\overline{M}$ then the restriction of $\psi$ to $M$ belongs to $C^{\infty}_0(M)$. By the weighted Green formula and the weighted Bochner formula:
$$\overline{\textrm{div}}_f\left(\frac{1}{2}\overline{\nabla}|\overline{\nabla}\Phi|^2-\overline{\Delta}_f\Phi\overline{\nabla}\Phi\right)=|\overline{\nabla}^2\Phi|^2+\overline{\textrm{Ric}}_f(\overline{\nabla}\Phi,\overline{\nabla}\Phi)-\left(\overline{\Delta}_f\Phi\right)^2.$$

\noindent we have
\begin{eqnarray*}
X & := & \int_{\Omega}\psi^2\left[|\overline{\nabla}^2\Phi|^2+ \overline{\textrm{Ric}}_f(\overline{\nabla}\Phi,\overline{\nabla}\Phi) - \left(\bar{\Delta}_f\Phi\right)^2\right] d\nu \\
& = &\int_{\Omega}\psi^2\overline{\textrm{div}}_f\left(\frac{1}{2}\overline{\nabla}|\overline{\nabla}\Phi|^2-\overline{\Delta}_f\Phi\overline{\nabla}\Phi\right)d\nu \\
& = & -\int_{\Omega}\left\langle \overline{\nabla}\psi^2, \frac{1}{2}\overline{\nabla}|\overline{\nabla}\Phi|^2-\overline{\Delta}_f\Phi\overline{\nabla}\Phi\right\rangle d\nu \\
&  & + \int_M \psi^2\left\langle \eta,\frac{1}{2}\overline{\nabla}|\overline{\nabla}\Phi|^2-\overline{\Delta}_f\Phi\overline{\nabla}\Phi\right\rangle d\mu  \\
& = & -\int_{\Omega}\psi\left\langle \overline{\nabla}\psi, \overline{\nabla}|\overline{\nabla}\Phi|^2\right\rangle d\nu +2\int_{\Omega}\psi\overline{\Delta}_f\Phi\left\langle \overline{\nabla}\psi,\overline{\nabla}\Phi\right\rangle d\nu \\
&  & + \int_M \psi^2\left(\frac{1}{2}\langle \eta,\overline{\nabla}|\overline{\nabla}\Phi|^2\rangle-u\overline{\Delta}_f\Phi\right) d\mu  \\
& = & -2\int_{\Omega}\psi\overline{\nabla}^2\Phi\left(\overline{\nabla}\psi, \overline{\nabla}\Phi\right) d\nu+2\int_{\Omega}\psi\overline{\Delta}_f\Phi\left\langle \overline{\nabla}\psi,\overline{\nabla}\Phi\right\rangle d\nu \\
&  & + \int_M \psi^2\left(\frac{1}{2}\langle \eta,\overline{\nabla}|\overline{\nabla}\Phi|^2\rangle-u\overline{\Delta}_f\Phi\right) d\mu\\
\end{eqnarray*}

By Young’s inequality $2ab \leq 2a^2 + \frac{1}{2}b^2$, we have
\begin{eqnarray}
X & \leq & 4\int_{\Omega}|\overline{\nabla}\Phi|^2|\overline{\nabla}\psi|^2 d\nu+\frac{1}{2}\int_{\Omega}\psi^2 |\overline{\nabla}^2\Phi|^2d\nu+ \frac{1}{2}\int_{\Omega}\psi^2 (\overline{\Delta}_f\Phi)^2 d\nu \nonumber \\
\label{e3} & + & \int_M \psi^2\left(\frac{1}{2}\langle \eta,\overline{\nabla}|\overline{\nabla}\Phi|^2\rangle-u\overline{\Delta}_f\Phi\right)d\mu
\end{eqnarray}

On $M$, we have
$$\overline{\nabla}\Phi=\nabla \Phi+u\eta=\sum_{i=1}^n\nabla_i\Phi e_i+u\eta$$
\noindent where $\{e_1, \cdots , e_n\}$ is a local orthonormal frame on $M$. By (\ref{eq1}), on $M$, we obtain
\begin{eqnarray*}
\frac{1}{2}\langle \eta,\overline{\nabla}|\overline{\nabla}\Phi|^2\rangle-u\overline{\Delta}_f\Phi & = & \overline{\nabla}^2\Phi(\eta,\overline{\nabla}\Phi)- u\Delta_f\Phi+u\langle\vec{H}_f, \overline{\nabla}\Phi\rangle\\
& & -u\overline{\nabla}^2\Phi(\eta,\eta)\\
& = & \sum_{i=1}^n\nabla_i\Phi \overline{\nabla}^2\Phi(\eta,e_i)- u\Delta_f\Phi +u\langle\vec{H}_f, \overline{\nabla}\Phi\rangle\\
\end{eqnarray*}

Note that
\begin{eqnarray*}
 \overline{\nabla}^2\Phi(\eta,e_i) & = & \left\langle \overline{\nabla}_{e_i}\overline{\nabla}\Phi,\eta\right\rangle\\
 &=& e_i\langle \overline{\nabla}\Phi,\eta\rangle- \langle \overline{\nabla}\Phi, \overline{\nabla}_{e_i}\eta\rangle\\
 &=& \nabla_iu- \langle \nabla \Phi, \overline{\nabla}_{e_i}\eta\rangle\\
 &=& \nabla_iu- \sum_{j=1}^n \nabla_j\Phi \langle e_j, \overline{\nabla}_{e_i}\eta\rangle\\
 &=& \nabla_iu+\sum_{j=1}^n \nabla_j\Phi B_{ij}\\
\end{eqnarray*}

So, on $M$, we obtain
\begin{eqnarray}
\frac{1}{2}\langle \eta,\overline{\nabla}|\overline{\nabla}\Phi|^2\rangle-u\overline{\Delta}_f\Phi & = & \sum_{i=1}^n\nabla_i\Phi\nabla_iu+ \sum_{i,j=1}^n\nabla_i\Phi\nabla_j\Phi B_{ij}- u\Delta_f\Phi \nonumber \\
&  & + u\langle\vec{H}_f, \overline{\nabla}\Phi\rangle \nonumber \\
\label{e4}& = & \langle \nabla\Phi,\nabla u\rangle + B(\nabla\Phi,\nabla\Phi)- u\Delta_f\Phi\\
& &  +u\langle\vec{H}_f, \overline{\nabla}\Phi\rangle.\nonumber
\end{eqnarray}

It follows from (\ref{e3}) and (\ref{e4}) that
\begin{align*}
 & \int_{\Omega}\psi^2\left[|\overline{\nabla}^2\Phi|^2+ 2\overline{\textrm{Ric}}_f(\overline{\nabla}\Phi,\overline{\nabla}\Phi) - 3\left(\bar{\Delta}_f\Phi\right)^2\right] d\nu & \\ 
& \leq  8\int_{\Omega}|\overline{\nabla}\Phi|^2|\overline{\nabla}\psi|^2 d\nu+ 2\int_M \psi^2\langle \nabla\Phi,\nabla u\rangle d\mu & \\
& + 2\int_M \psi^2 B(\nabla\Phi,\nabla\Phi) d\mu-2\int_M \psi^2u\Delta_f\Phi \ d\mu + 2\int_M \psi^2u\langle\vec{H}_f, \overline{\nabla}\Phi\rangle d\mu
\end{align*}

By the weighted Green formula, we have
\begin{eqnarray*}\int_M \psi^2\langle \nabla\Phi,\nabla u\rangle d\mu & = & \int_M \langle \nabla\Phi,\nabla (u\psi^2)\rangle d\mu-2\int_M u\psi \langle \nabla\Phi,\nabla\psi\rangle d\mu\\
& = & -\int_M \psi^2 u \Delta_f\Phi \ d\mu-2\int_M u\psi \langle \nabla\Phi,\nabla\psi\rangle d\mu\\
\end{eqnarray*}

Therefore
\begin{align*}
  & \int_{\Omega}\psi^2\left[|\overline{\nabla}^2\Phi|^2+ 2\overline{\textrm{Ric}}_f(\overline{\nabla}\Phi,\overline{\nabla}\Phi) - 3\left(\bar{\Delta}_f\Phi\right)^2\right] d\nu & \\ 
  & \leq  8\int_{ \Omega}|\overline{\nabla}\Phi|^2|\overline{\nabla}\psi|^2 d\nu -4\int_M u\psi \langle \nabla\Phi,\nabla\psi\rangle d\mu & \\
  & + 2\int_M \psi^2 B(\nabla\Phi,\nabla\Phi) d\mu+2\int_M \psi^2u\langle\vec{H}_f, \overline{\nabla}\Phi\rangle d\mu-4\int_M \psi^2u\Delta_f\Phi \ d\mu & \\
\end{align*}
\end{proof}

Lemma \ref{prop1} has the following corollary.

\begin{cor}\label{cor1}  Assume that $(\overline{M},g,f)$ is a Ricci shrinker with $\overline{\textrm{Ric}}_f=\frac{1}{2} g$ and $M$ is a properly embedded $f$-minimal hypersurface in $\overline{M}$. Also assume that $\Phi \in H^1(\overline{M}, \nu)$ satisfies the following conditions:
\begin{itemize}
\item  The restriction of $\Phi$ to $\Omega$ is smooth up to $\partial\Omega$;
\item $\overline{\Delta}_f\Phi=0$ on $\Omega$;
\item  The restriction of $\Phi$ to $M$ is a nonconstant eigenfunction of $\Delta_f$ corresponding to an eigenvalue $\lambda$, i.e., $\Delta_f \Phi +\lambda \Phi=0.$
\end{itemize}
Then 
\[ \begin{split}
    \int_{\Omega}\psi^2|\overline{\nabla}^2\Phi|^2 d\nu+ & \left(1-4\lambda\right)\int_{\Omega}\psi^2|\overline{\nabla} \Phi|^2 d\nu\\
    & \leq 8\int_{\Omega}|\overline{\nabla}\Phi|^2|\overline{\nabla}\psi|^2 d\nu-4\int_M u\psi \langle \nabla\Phi,\nabla\psi\rangle d\mu\\ 
    &\quad+ 2\int_M \psi^2 B(\nabla\Phi,\nabla\Phi) d\mu+8\lambda\int_{\Omega} \psi \Phi\langle \overline{\nabla}\Phi, \overline{\nabla}\psi\rangle d\nu.
\end{split}
\] \noindent for every $\psi\in C^{\infty}_0(\overline{M})$.
\end{cor}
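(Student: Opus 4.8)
The plan is to specialize Lemma~\ref{prop1} to the present situation by inserting the three structural hypotheses one at a time, and then to convert a single boundary integral into interior integrals. First, since $\overline{\textrm{Ric}}_f=\frac{1}{2}g$ we have $2\overline{\textrm{Ric}}_f(\overline{\nabla}\Phi,\overline{\nabla}\Phi)=|\overline{\nabla}\Phi|^2$, so the bracket on the left-hand side of Lemma~\ref{prop1} becomes $|\overline{\nabla}^2\Phi|^2+|\overline{\nabla}\Phi|^2-3(\overline{\Delta}_f\Phi)^2$. The hypothesis $\overline{\Delta}_f\Phi=0$ on $\Omega$ kills the last term, turning the left-hand side into $\int_\Omega \psi^2\big(|\overline{\nabla}^2\Phi|^2+|\overline{\nabla}\Phi|^2\big)\,d\nu$. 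On the right-hand side the $f$-minimality $\vec{H}_f=0$ eliminates the term $2\int_M\psi^2 u\langle\vec{H}_f,\overline{\nabla}\Phi\rangle\,d\mu$, and the eigenvalue equation $\Delta_f\Phi=-\lambda\Phi$ on $M$ replaces $-4\int_M\psi^2 u\,\Delta_f\Phi\,d\mu$ by $4\lambda\int_M\psi^2 u\,\Phi\,d\mu$. After these substitutions the inequality of Lemma~\ref{prop1} reads
\[
\int_\Omega \psi^2|\overline{\nabla}^2\Phi|^2\,d\nu+\int_\Omega\psi^2|\overline{\nabla}\Phi|^2\,d\nu
\le 8\int_\Omega|\overline{\nabla}\Phi|^2|\overline{\nabla}\psi|^2\,d\nu
-4\int_M u\psi\langle\nabla\Phi,\nabla\psi\rangle\,d\mu
+2\int_M\psi^2 B(\nabla\Phi,\nabla\Phi)\,d\mu
+4\lambda\int_M\psi^2 u\,\Phi\,d\mu.
\]

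The only genuine computation is to rewrite the boundary integral $\int_M\psi^2 u\,\Phi\,d\mu$ as interior integrals over $\Omega$, and here the hypothesis $\overline{\Delta}_f\Phi=0$ enters a second time. I would apply the weighted divergence theorem on $\Omega$ (equivalently, the weighted Green formula already used in the proof of Lemma~\ref{prop1}) to the vector field $\psi^2\Phi\,\overline{\nabla}\Phi$. Since $\overline{\textrm{div}}_f(\psi^2\Phi\,\overline{\nabla}\Phi)=\langle\overline{\nabla}(\psi^2\Phi),\overline{\nabla}\Phi\rangle+\psi^2\Phi\,\overline{\Delta}_f\Phi$ and the last term vanishes on $\Omega$, expanding $\overline{\nabla}(\psi^2\Phi)=\psi^2\overline{\nabla}\Phi+2\psi\Phi\,\overline{\nabla}\psi$ and integrating gives the identity
\[
\int_M\psi^2 u\,\Phi\,d\mu=\int_\Omega\psi^2|\overline{\nabla}\Phi|^2\,d\nu+2\int_\Omega\psi\Phi\langle\overline{\nabla}\Phi,\overline{\nabla}\psi\rangle\,d\nu,
\]
where I used $u=\langle\overline{\nabla}\Phi,\eta\rangle|_M$ and that $\eta$ is the outward unit normal to $\partial\Omega=M$, so the boundary term produced by the divergence theorem is exactly $\int_M\psi^2\Phi u\,d\mu$.

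Multiplying this identity by $4\lambda$, substituting it into the displayed inequality, and moving the resulting term $4\lambda\int_\Omega\psi^2|\overline{\nabla}\Phi|^2\,d\nu$ to the left-hand side produces the coefficient $(1-4\lambda)$ in front of $\int_\Omega\psi^2|\overline{\nabla}\Phi|^2\,d\nu$ and leaves $8\lambda\int_\Omega\psi\Phi\langle\overline{\nabla}\Phi,\overline{\nabla}\psi\rangle\,d\nu$ on the right, which is precisely the claimed inequality. The argument is essentially bookkeeping; the one point requiring care — and the only place where structure is genuinely exploited — is the boundary-to-interior conversion of the eigenvalue term, since it is the double use of $\overline{\Delta}_f\Phi=0$ (once to simplify the Bochner bracket, once in the divergence identity) that makes the two halves fit together. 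I would also verify that $\psi^2\Phi\,\overline{\nabla}\Phi$ is admissible for the weighted divergence theorem on $\Omega$, which follows from $\psi\in C^\infty_0(\overline{M})$ together with the assumed smoothness of $\Phi$ on $\Omega$ up to $\partial\Omega$.
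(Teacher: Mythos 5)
Your proof is correct and follows essentially the same route as the paper: specialize Lemma~\ref{prop1} using the Ricci shrinker condition, $\overline{\Delta}_f\Phi=0$, $f$-minimality, and the eigenvalue equation, then convert the boundary term $4\lambda\int_M\psi^2 u\,\Phi\,d\mu$ into interior integrals via the weighted Green/divergence identity applied to $\psi^2\Phi\,\overline{\nabla}\Phi$ (which is exactly the paper's Green-formula step) and absorb $4\lambda\int_\Omega\psi^2|\overline{\nabla}\Phi|^2\,d\nu$ into the left-hand side. No gaps; your bookkeeping matches the paper's proof term for term.
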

\begin{proof} By Proposition \ref{prop1}, we have

\[\begin{split}
\int_{\Omega}\psi^2|\overline{\nabla}^2\Phi|^2 d\nu & + \int_{\Omega}\psi^2|\overline{\nabla} \Phi|^2 d\nu \\
& \leq  8\int_{\Omega}|\overline{\nabla}\Phi|^2|\overline{\nabla}\psi|^2 d\nu-4\int_M u\psi \langle \nabla\Phi,\nabla\psi\rangle d\mu\\
& \quad+ 2\int_M \psi^2 B(\nabla\Phi,\nabla\Phi) d\mu+4\lambda\int_M \psi^2 u \Phi d\mu.
\end{split}\]

Since $\overline{\Delta}_f\Phi=0$ on $\Omega$, by Green formula, we obtain
\begin{eqnarray*}
4\lambda\int_M \psi^2 u \Phi d\mu & = & 4\lambda\int_M \psi^2\Phi \langle \overline{\nabla} \Phi, \eta\rangle d\mu\\
& = & 4\lambda\int_{\Omega} \langle \overline{\nabla}\Phi, \overline{\nabla}(\psi^2\Phi)\rangle d\nu\\
& = & 4\lambda\int_{\Omega} \psi^2 |\overline{\nabla}\Phi|^2 d\nu +8\lambda\int_{\Omega} \psi \Phi\langle \overline{\nabla}\Phi, \overline{\nabla}\psi\rangle d\nu
\end{eqnarray*}

This prove the result.

\end{proof}

\section{Lower bounded for $\lambda_1(\Delta_f)$}\label{sec5}

\medskip
In this section we will give the proof of the estimate of the first nonzero eigenvalue for the drift Laplacian for properly embedded $f$-minimal hypersurfaces.

\begin{theorem} Let $M$ be a $f$-minimal properly embedded hypersurface in a Ricci shrinker $(\overline{M},g,f)$ such that $\overline{\textrm{Ric}}_f=k g$. Assume that there is a constant $0\leq\epsilon<2k$ such that the trace of Hessian of $f$ on $\overline{M}$ restricted on the normal bundle of $M$ satisfies $\overline{R}+2\textrm{tr}_{M^{\perp}} \overline{\nabla}^2f\geq - \epsilon  f$ outside a compact set. Then the spectrum of $\Delta_f$ with domain on $L^2(M,\mu)$ is discrete and the first nonzero eigenvalue $\lambda_1(\Delta_f)$ satisfies
$$\lambda_1(\Delta_f)\geq \frac{k}{2}.$$
\end{theorem}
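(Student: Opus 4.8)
The plan is to obtain discreteness from Theorem \ref{tpdis} for free and then run a weighted Choi--Wang argument built on Corollary \ref{cor1}. Since $M$ is $f$-minimal its weighted mean curvature vector vanishes, so $|\vec{H}_f|\equiv 0\le\sqrt{\rho f+C}$ with $\rho=C=0$, and the curvature hypothesis is exactly that of Theorem \ref{tpdis} with $\epsilon+\rho=\epsilon<2k$; hence the spectrum of $\Delta_f$ on $L^2(M,\mu)$ is discrete and a nonconstant first eigenfunction $\phi$ with $\Delta_f\phi+\lambda_1\phi=0$, $\lambda_1=\lambda_1(\Delta_f)>0$, exists. To reach the stated bound I would first normalize the metric: replacing $g$ by $\tilde g=2k\,g$ leaves the Levi--Civita connection, the function $f$ and the tensor $\overline{\nabla}^2f$ unchanged, turns $\overline{\textrm{Ric}}_f=kg$ into $\overline{\textrm{Ric}}_f=\tfrac12\tilde g$, divides $\overline{R}+2\textrm{tr}_{M^{\perp}}\overline{\nabla}^2f$ and all eigenvalues by $2k$, and so reduces the claim $\lambda_1\ge k/2$ to the normalized statement $\lambda_1\ge\tfrac14$ under $\overline{\textrm{Ric}}_f=\tfrac12 g$ and $\epsilon<1$, which is the setting of Corollary \ref{cor1}.

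Next, since $M$ is a two-sided ($\vec{H}_f=-H_f\eta$ selects a normal) properly embedded hypersurface, I would regard it as separating $\overline{M}$ into two domains $\Omega^+,\Omega^-$ with common boundary $M$ and opposite outward normals $\eta^\pm$, so that the corresponding second fundamental forms satisfy $B^+=-B^-$ on $M$; the finiteness of the fundamental group noted in Remark \ref{r2} lets one reduce a possibly non-separating $M$ to this case by a covering argument in the spirit of Choi--Schoen and Li--Wei. On each side I solve the weighted Dirichlet problem $\overline{\Delta}_f\Phi^\pm=0$ on $\Omega^\pm$, $\Phi^\pm|_M=\phi$, by minimizing the weighted energy $\int_{\Omega^\pm}|\overline{\nabla}\Phi|^2\,d\nu$ in $H^1(\overline{M},\nu)$; elliptic regularity gives smoothness up to $M$, and the finite weighted volume $\int_{\overline{M}}e^{-f}dv<\infty$ together with the quadratic growth $f(x)\sim\tfrac14 r(x)^2$ from Theorem \ref{CZ1} yields the integrability $\int_{\Omega^\pm}|\overline{\nabla}\Phi^\pm|^2\,d\nu<\infty$ needed below.

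Now I apply Corollary \ref{cor1} on each side with the \emph{same} cutoff $\psi\in C_0^\infty(\overline{M})$ and add the two inequalities. Along $M$ one has $\nabla\Phi^+=\nabla\Phi^-=\nabla\phi$ (the common tangential gradient), while $B^+=-B^-$, so the boundary terms $2\int_M\psi^2 B^\pm(\nabla\phi,\nabla\phi)\,d\mu$ cancel exactly, giving
\[
\sum_{\pm}\int_{\Omega^\pm}\psi^2\Big(|\overline{\nabla}^2\Phi^\pm|^2+(1-4\lambda_1)|\overline{\nabla}\Phi^\pm|^2\Big)\,d\nu\;\le\; R(\psi),
\]
where $R(\psi)$ collects the remaining terms, each carrying a factor $\overline{\nabla}\psi$. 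Choosing $\psi=\psi_j\to 1$ with $\overline{\nabla}\psi_j$ supported in far annuli and uniformly bounded, every term of $R(\psi_j)$ tends to $0$ by Cauchy--Schwarz and the integrability established above (for the interior terms) together with $u^\pm,\nabla\phi\in L^2(M,\mu)$ (for the boundary term $-4\int_M u^\pm\psi_j\langle\nabla\phi,\nabla\psi_j\rangle\,d\mu$). In the limit I obtain
\[
\sum_{\pm}\int_{\Omega^\pm}|\overline{\nabla}^2\Phi^\pm|^2\,d\nu+(1-4\lambda_1)\sum_{\pm}\int_{\Omega^\pm}|\overline{\nabla}\Phi^\pm|^2\,d\nu\;\le\;0 .
\]
The Hessian integrals are nonnegative and $\sum_{\pm}\int_{\Omega^\pm}|\overline{\nabla}\Phi^\pm|^2\,d\nu>0$ because $\phi$, hence each extension $\Phi^\pm$, is nonconstant; therefore $1-4\lambda_1\le 0$, i.e.\ $\lambda_1\ge\tfrac14$ in the normalized metric, which unscales to $\lambda_1(\Delta_f)\ge k/2$.

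The Bochner/Reilly bookkeeping and the normalization are routine, the former being already packaged in Corollary \ref{cor1}. The genuine obstacle is the analysis on the \emph{noncompact} domains $\Omega^\pm$: constructing an $f$-harmonic extension that truly lies in $H^1(\overline{M},\nu)$, is regular up to $M$, and has enough decay so that all interior and boundary cutoff terms vanish as $\psi_j\to 1$. This is precisely where properness of $M$, the finiteness of the weighted volume (Remark \ref{r2}), and the sharp potential growth (Theorem \ref{CZ1}) must be combined; in particular, controlling $\int_M u^\pm\psi_j\langle\nabla\phi,\nabla\psi_j\rangle\,d\mu$ and verifying $u^\pm\in L^2(M,\mu)$ is the crux of the argument.
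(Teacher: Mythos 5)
Your proposal is correct and follows essentially the same route as the paper: discreteness via Theorem \ref{tpdis}, scaling to $k=\tfrac12$, reduction to the simply connected case via the finite fundamental group of Remark \ref{r2} and a finite covering, an $f$-harmonic extension of the eigenfunction obtained by minimizing the weighted Dirichlet energy in $H^1(\overline{M},\nu)$, and then Corollary \ref{cor1} with cutoffs $\psi_k\to 1$, with the same technical crux (integrability needed for the dominated-convergence limit of the boundary and annulus terms) left at the same level of detail as in the paper. The only deviation is cosmetic: you add the Reilly-type inequalities from the two sides so that the terms $2\int_M\psi^2 B^{\pm}(\nabla\phi,\nabla\phi)\,d\mu$ cancel exactly, whereas the paper works on the single side where this term is nonpositive (passing to a subsequence of cutoff indices in the noncompact case); both devices close the argument equally well.
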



\begin{proof} By a scaling of the metric $g$, we can assume that $k=\frac{1}{2}$. In this case, we have $(\overline{M},g,f)$ is a Ricci shrinker with $\overline{\textrm{Ric}}_f=\frac{1}{2}g$. By Remark \ref{r2} we have $\overline{M}$ has finite fundamental group. We first assume that $\overline{M}$ is simply connected. Since $M$ is embedded in $\overline{M}$ then $M$ is orientable and the complement $\overline{M}\setminus M$ has two connected components $\Omega$ and $\tilde{\Omega}$ such that $\partial\Omega=\partial \tilde{\Omega}=M$ (see \cite{CS1985}). We denote by $\eta$ the outward-pointing unit normal to $\partial\Omega$ and by $B$ the second fundamental form of $M$ with respect to $\eta$.

The discreteness of spectrum of $\Delta_f$ follows from Theorem \ref{tpdis} and guarantees the existence of a nonconstant eigenfunction $\varphi\in H^1(M,\mu)\cap C^{\infty}(M)$ of $\Delta_f$ corresponding to an eigenvalue $\lambda$, i.e,
$$\Delta_f\varphi+\lambda\varphi=0.$$

We claim that there is a function $\Phi\in H^1(\overline{M},\nu)$ such that $\left.\Phi\right|_M=\varphi$, $\overline{\Delta}_f\Phi=0$ on $\Omega$ and $\overline{\Delta}_f\Phi=0$ on $\tilde{\Omega}$. To prove this statement, define the functional $E:H^1(\overline{M},\nu)\rightarrow \mathbb{R}$ by
$$E(\Phi)=\int_{\overline{M}}|\overline{\nabla}\Phi|^2 d\nu\geq 0.$$

Denote by $\alpha:=\displaystyle\inf_{\mathcal{A}} E$, where $$\mathcal{A}:=\{\Phi\in H^1(\overline{M},\nu); \ \left.\Phi\right|_M=\varphi\}.$$ Consider $(\Phi_k)_{k\in\mathbb{N}}\subset \mathcal{A}$ a sequence such that $E(\Phi_k)\rightarrow \alpha.$ We have
\begin{equation}\label{e1a}
\sup_{k\in\mathbb{N}}\int_{\overline{M}}|\overline{\nabla}\Phi_k|^2 d\nu<\infty.\end{equation}

Since $(\overline{M},g,f)$ is a Ricci shrinker with $\overline{\textrm{Ric}}_f=\frac{1}{2}g$, holds the Poincaré Inequality:
$$\int_{\overline{M}}(\Phi-\bar\Phi)^2d\nu\leq 2 \int_{\overline{M}}|\overline{\nabla}\Phi|^2d\nu$$
\noindent for every $\Phi\in H^1(\overline{M},\nu)$, where 
$$\bar\Phi=\frac{1}{\nu(\overline{M})}\int_{\overline{M}}\Phi d\nu.$$

So, by (\ref{e1a}) we have
\begin{equation}\label{e2a}\sup_{k\in\mathbb{N}}\int_{\overline{M}}(\Phi_k-\bar\Phi_k)^2d\nu<\infty.\end{equation}

Fix a point $p\in M$ and consider $B_r$ the geodesic ball in $\overline{M}$ of radius $r>0$ centered at $p$. By (\ref{e1a}), (\ref{e2a}) and the Sobolev trace theorem, we have there is a nonnegative constant $C$ depending of $r$, such that
$$\int_{M\cap B_r}(\varphi-\bar\Phi_k)^2d\mu\leq C$$
\noindent for every $k\in\mathbb{N}$. This implies that $\displaystyle\sup_{k\in\mathbb{N}}(\bar\Phi_k)^2<\infty$. It follows from (\ref{e2a}) that 
$$\sup_{k\in\mathbb{N}}\int_{\overline{M}}\Phi_k^2d\nu<\infty.$$

It follows that $(\Phi_k)_{k\in\mathbb{N}}$ is a bounded sequence in $H^1(\overline{M},\nu)$. Since $(\overline{M},g,f)$ is a Ricci shrinker then from Corollary 1 in \cite{XCDZ} $H^1(\overline{M},\nu)\subset\subset L^2(\overline{M},\nu)$. By passing a subsequence, we have that there is $\Phi\in H^1(\overline{M},\nu)$ such that $\Phi_k\rightarrow \Phi$ in $H^1(\overline{M},\nu)$. In particular, we conclude that $\Phi\in \mathcal{A}$. Using the continuity of $E$  we obtain $E(\Phi)=\alpha$. Therefore, the statement follows. 

If $\overline{M}$ is compact, by Corollary \ref{cor1}, we have
 $$\int_{\Omega}|\overline{\nabla}^2\Phi|^2 d\nu+\left(1-4\lambda\right)\int_{\Omega}|\overline{\nabla} \Phi|^2 d\nu \leq 2\int_M B(\nabla\varphi,\nabla\varphi) d\mu.$$
 We can assume that $\displaystyle\int_M B(\nabla\varphi,\nabla\varphi) d\mu\leq 0$, otherwise we can work with $\tilde{\Omega}$ rather than with $\Omega$. So, in this case, we obtain
 \begin{equation}\label{eqtp}
 \int_{\Omega}|\overline{\nabla}^2\Phi|^2 d\nu+\left(1-4\lambda\right)\int_{\Omega}|\overline{\nabla} \Phi|^2 d\nu \leq 0.
 \end{equation}

If $\overline{M}$ is noncompact, we claim that (\ref{eqtp}) also holds. To prove this statement, fix a point $p\in M$. For each $k\in \mathbb{N}$, consider $B_k$ the geodesic ball in $\overline{M}$ of radius $k$ centered at $p$. Consider a nonnegative function $\psi_k\in C_0^{\infty}(\overline{M})$ such that $\psi_k=1$ on $B_k$, $|\overline{\nabla} \psi_k|\leq 1$ on $B_{k+1}\setminus B_k$ and $\psi_k=0$ on $\overline{M}\setminus B_{k+1}$. By Corollary \ref{cor1}, we have

\[\begin{split}
    \int_{\Omega}\psi_k^2|\overline{\nabla}^2\Phi|^2 d\nu+&\left(1-4\lambda\right)\int_{\Omega}\psi_k^2|\overline{\nabla} \Phi|^2 d\nu \\
    & \leq 8\int_{\Omega}|\overline{\nabla}\Phi|^2|\overline{\nabla}\psi_k|^2 d\nu-4\int_M u\psi_k \langle \nabla\varphi,\nabla\psi_k\rangle d\mu\\
& \quad+ 2\int_M \psi_k^2 B(\nabla\varphi,\nabla\varphi) d\mu+8\lambda\int_{\Omega} \psi_k \Phi\langle \overline{\nabla}\Phi, \overline{\nabla}\psi_k\rangle d\nu.
\end{split}
\]
\noindent where $u=\left.\langle \overline{\nabla}\Phi, \eta\rangle\right|_M$. We can assume $$\left\{k\in\mathbb{N}; \ \int_M \psi_k^2 B(\nabla\varphi,\nabla\varphi) d\mu\leq 0\right\}$$
\noindent is a infinite set, otherwise we can work with $\tilde{\Omega}$ rather than with $\Omega$. So, by passing to a subsequence, we have
\[\begin{split}
\int_{\Omega}\psi_k^2|\overline{\nabla}^2\Phi|^2 d\nu+ & \left(1-4\lambda\right)\int_{\Omega}\psi_k^2|\overline{\nabla} \Phi|^2 d\nu \\
& \leq 8\int_{\Omega}|\overline{\nabla}\Phi|^2|\overline{\nabla}\psi_k|^2 d\nu-4\int_M u\psi_k \langle \nabla\varphi,\nabla\psi_k\rangle d\mu\\
& \label{1} \quad+ 8\lambda\int_{\Omega} \psi_k \Phi\langle \overline{\nabla}\Phi, \overline{\nabla}\psi_k\rangle d\nu.
\end{split}\]

It follows from the Cauchy-Schwarz inequality that
\[\begin{split}
\int_{\Omega}\psi_k^2|\overline{\nabla}^2\Phi|^2 d\nu+&\left(1-4\lambda\right)\int_{\Omega}\psi_k^2|\overline{\nabla} \Phi|^2 d\nu \\
& \leq 8\int_{\Omega}|\overline{\nabla}\Phi|^2|\overline{\nabla}\psi_k|^2 d\nu+4\int_M |u| |\psi_k| |\nabla\varphi| |\nabla\psi_k| d\mu\\
& \quad + 8\lambda\int_{\Omega} |\psi_k| |\Phi| |\overline{\nabla}\Phi| |\overline{\nabla}\psi_k| d\nu.
\end{split}\]

Note that, by the Dominated Convergence Theorem, the right-hand side of the inequality above goes to $0$ when $k\rightarrow \infty$. However, by the Monotone Convergence Theorem, we have
$$\lim_{k \to \infty} \int_{\Omega}\psi_k^2|\overline{\nabla}^2\Phi|^2 d\nu=\int_{\Omega}|\overline{\nabla}^2\Phi|^2 d\nu$$
\noindent and
$$\lim_{k \to \infty} \int_{\Omega}\psi_k^2|\overline{\nabla} \Phi|^2 d\nu=\int_{\Omega}|\overline{\nabla} \Phi|^2 d\nu.$$

This implies that
$$\int_{\Omega}|\overline{\nabla}^2\Phi|^2 d\nu+\left(1-4\lambda\right)\int_{\Omega}|\overline{\nabla} \Phi|^2 d\nu\leq 0.$$
So the claim holds. Since $\Phi$ is a nonconstant function, we have $\lambda\geq \frac{1}{4}$. Therefore, the result is proven if $\overline{M}$ is simply connected.

Assume now that $\overline{M}$ is not simply connected. Since $\overline{M}$ has finite fundamental group, its universal covering $\pi: \hat{M} \rightarrow \overline{M}$ has a finite number of sheets. In particular, $\pi$ is a proper function. Consider the Riemannian metric $\hat{g}$ on $\hat{M}$ given by the pullback of $g$ by $\pi$. It is well know that $\pi: (\hat{M},\hat{g}) \rightarrow (\overline{M},g)$ is a local isometry. So, taking $\hat{f}=f\circ \pi$, we have $(\hat{M},\hat{g},\hat{f})$ is a Ricci shrinker with $\hat{R}ic_{\hat{f}}=\frac{1}{2}\hat{g}$. Furthermore, the lift $\Sigma$ of $M$ is a $\hat{f}$-minimal properly embedded hypersurface in $\hat{M}$ such that the trace of Hessian of $\hat{f}$ on $\hat{M}$ restricted on the normal bundle of $\Sigma$ satisfies $\hat{R}+2\textrm{tr}_{\Sigma^{\perp}} \hat{\nabla}^2\hat{f}\geq - \epsilon  \hat{f}$ outside a compact set. Since $\hat{M}$ is simply connected, we can apply this result, which has already been proven when the ambient manifold is simply connected. So, we obtain the first eigenvalue of the drifted Laplacian $\Delta_{\hat{f}}$ on $\Sigma$ satisfies $\lambda_1(\Delta_{\hat{f}})\geq \frac{1}{4}$. Hence,
$$\lambda_1(\Delta_f)\geq \lambda_1(\Delta_{\hat{f}})\geq \frac{1}{4}.$$

\end{proof}

\begin{bibdiv}
\begin{biblist}

\bib{BT2024}{misc}{
 author={Brendle, S.},
 author={Tsiamis, R.},
 review={arXiv:2402.11803v2},
 title={Eigenvalue estimates on shrinkers},
 date={2024},
}

\bib{Cao2010}{article}{
   author={Cao, Huai-Dong},
   title={Recent progress on Ricci solitons},
   conference={
      title={Recent advances in geometric analysis},
   },
   book={
      series={Adv. Lect. Math. (ALM)},
      volume={11},
      publisher={Int. Press, Somerville, MA},
   },
   isbn={978-1-57146-143-8},
   date={2010},
   pages={1--38},
   review={\MR{2648937}},
}
\bib{caozhou}{article}{
   author={Cao, Huai-Dong},
   author={Zhou, Detang},
   title={On complete gradient shrinking Ricci solitons},
   journal={J. Differential Geom.},
   volume={85},
   date={2010},
   number={2},
   pages={175--185},
   issn={0022-040X},
   review={\MR{2732975}},
}
\bib{chen}{article}{
   author={Chen, Bing-Long},
   title={Strong uniqueness of the Ricci flow},
   journal={J. Differential Geom.},
   volume={82},
   date={2009},
   number={2},
   pages={363--382},
   issn={0022-040X},
   review={\MR{2520796}},
}
\bib{CZ2013}{article}{
   author={Cheng, Xu},
   author={Zhou, Detang},
   title={Volume estimate about shrinkers},
   journal={Proc. Amer. Math. Soc.},
   volume={141},
   date={2013},
   number={2},
   pages={687--696},
   issn={0002-9939},
   review={\MR{2996973}},
   doi={10.1090/S0002-9939-2012-11922-7},
}

\bib{XCDZ}{article}{
   author={Cheng, Xu},
   author={Zhou, Detang},
   title={Eigenvalues of the drifted Laplacian on complete metric measure
   spaces},
   journal={Commun. Contemp. Math.},
   volume={19},
   date={2017},
   number={1},
   pages={1650001, 17},
   issn={0219-1997},
   review={\MR{3575913}},
   doi={10.1142/S0219199716500012},
}
\bib{CZ2023}{article}{
   author={Cheng, Xu},
   author={Zhou, Detang},
   title={Rigidity of four-dimensional gradient shrinking Ricci solitons},
   journal={J. Reine Angew. Math.},
   volume={802},
   date={2023},
   pages={255--274},
   issn={0075-4102},
   review={\MR{4635343}},
   doi={10.1515/crelle-2023-0042},
}
\bib{CMZ2014}{article}{
 author={Cheng, Xu},
 author={Mejia, Tito},
 author={Zhou, Detang},
 issn={1945-5844},
 issn={0030-8730},
 doi={10.2140/pjm.2014.271.347},
 review={Zbl 1322.58020},
 title={Eigenvalue estimate and compactness for closed {{\(f\)}}-minimal surfaces},
 journal={Pacific Journal of Mathematics},
 volume={271},
 number={2},
 pages={347--367},
 date={2014},
 publisher={Mathematical Sciences Publishers (MSP), Berkeley, CA; Pacific Journal of Mathematics c/o University of California, Berkeley, CA},
}
\bib{CMZ2015}{article}{
   author={Cheng, Xu},
   author={Mejia, Tito},
   author={Zhou, Detang},
   title={Stability and compactness for complete $f$-minimal surfaces},
   journal={Trans. Amer. Math. Soc.},
   volume={367},
   date={2015},
   number={6},
   pages={4041--4059},
   issn={0002-9947},
   review={\MR{3324919}},
   doi={10.1090/S0002-9947-2015-06207-2},
}

\bib{CVZ2021}{article}{
 author={Cheng, Xu},
 author={Vieira, Matheus},
 author={Zhou, Detang},
 issn={1073-7928},
 issn={1687-0247},
 doi={10.1093/imrn/rnz355},
 review={Zbl 1489.53080},
 title={Volume growth of complete submanifolds in gradient Ricci solitons with bounded weighted mean curvature},
 journal={IMRN. International Mathematics Research Notices},
 volume={2021},
 number={16},
 pages={12748--12777},
 date={2021},
 publisher={Oxford University Press, Cary, NC},
}

\bib{CS2009}{article}{
   author={Choe, Jaigyoung},
   author={Soret, Marc},
   title={First eigenvalue of symmetric minimal surfaces in  {{\(\mathbb{S}^{3}\)}}},
   journal={Indiana University mathematics journal},
   date={2009},
   pages={269--281},
   publisher={JSTOR}
   }

\bib{CS1985}{article}{
 author={Choi, Hyeong In},
 author={Schoen, Richard},
 title={The space of minimal embeddings of a surface into a three-dimensional manifold of positive Ricci curvature},
 journal={Inventiones Mathematicae},
 volume={81},
 date={1985},
 pages={387--394},
 issn={0020-9910},
 issn={1432-1297},
 review={Zbl 0577.53044},
 doi={10.1007/BF01388577},
}

\bib{CW1983}{article}{
   author={Choi, Hyeong In},
   author={Wang, Ai Nung},
   title={A first eigenvalue estimate for minimal hypersurfaces},
   journal={J. Differential Geom.},
   volume={18},
   date={1983},
   number={3},
   pages={559--562},
   issn={0022-040X},
   review={\MR{0723817}},
}

\bib{CM}{article}{
   author={Colding, Tobias H.},
   author={Minicozzi, Willian P.},
   title={Generic mean curvature flow I; generic singularities},
   journal={Annals of mathematics},
   date={2012},
   pages={755--833},
   publisher={JSTOR}
   }

\bib{DX}{article}{
 author={Ding, Qi},
 author={Xin, Y. L.},
 title={Volume growth eigenvalue and compactness for self-shrinkers},
 journal={The Asian Journal of Mathematics},
 volume={17},
 number={3},
 pages={443--456},
 date={2013},
 issn={1093-6106},
 issn={1945-0036},
 doi={10.4310/AJM.2013.v17.n3.a3},
 review={Zbl 1283.53062},
}

\bib{MD}{article}{
 author={Ma, Li},
 author={Du, Sheng-Hua},
 title={Extension of Reilly formula with applications to eigenvalue estimates for drifting Laplacians},
 journal={Comptes Rendus. Math{\'e}matique. Acad{\'e}mie des Sciences, Paris},
 volume={348},
 date={2010},
 number={21-22},
 pages={1203--1206},
 issn={1631-073X},
 doi={10.1016/j.crma.2010.10.003},
 review={Zbl 1208.58028},
}

\bib{DSS}{article}{
   author={Duncan, Jonah A. J.},
   author={Sire, Yannick },
   author={Spruck, Joel},
   title={An improved eigenvalue estimate for embedded minimal hypersurfaces in the sphere},
   journal={International Mathematics Research Notices},
   date={2024},
   number={18},
   pages={12556--12567},
   publisher={Oxford University Press}
   }

\bib{hamilton}{article}{
   author={Hamilton, Richard S.},
   title={The formation of singularities in the Ricci flow},
   conference={
      title={Surveys in differential geometry, Vol. II},
      address={Cambridge, MA},
      date={1993},
   },
   book={
      publisher={Int. Press, Cambridge, MA},
   },
   isbn={1-57146-027-6},
   date={1995},
   pages={7--136},
   review={\MR{1375255}},
}

\bib{JCZ}{misc}{
 author={Jim{\'e}nez, Asun},
 author={Chinchay, Carlos Tapia},
 author={Zhou, Detang},
 title={A lower bound for the first eigenvalue of a minimal hypersurface in the sphere},
 date={2024},
 review={arXiv:2405.20545}
}

\bib{LW}{article}{
 author={Li, Haizhong},
 author={Wei, Yong},
 title={{{\(f\)}}-minimal surface and manifold with positive {{\(m\)}}-Bakry-{\'E}mery Ricci curvature},
 journal={The Journal of Geometric Analysis},
 volume={25},
 number={1},
 pages={421--435},
 date={2015},
 issn={1050-6926},
 issn={1559-002X},
 doi={10.1007/s12220-013-9434-5},
 review={Zbl 1320.53075},
 }
\bib{MW2015}{article}{
   author={Munteanu, Ovidiu},
   author={Wang, Jiaping},
   title={Geometry of shrinking Ricci solitons},
   journal={Compos. Math.},
   volume={151},
   date={2015},
   number={12},
   pages={2273--2300},
   issn={0010-437X},
   review={\MR{3433887}},
   doi={10.1112/S0010437X15007496},
}

\bib{MW2017}{article}{
   author={Munteanu, Ovidiu},
   author={Wang, Jiaping},
   title={Geometry of shrinking Ricci solitons},
   journal={Compos. Math.},
   volume={151},
   date={2015},
   number={12},
   pages={2273--2300},
   issn={0010-437X},
   review={\MR{3433887}},
   doi={10.1112/S0010437X15007496},
}
\bib{prs}{article}{
   author={Pigola, Stefano},
   author={Rimoldi, Michele},
   author={Setti, Alberto G.},
   title={Remarks on non-compact gradient Ricci solitons},
   journal={Math. Z.},
   volume={268},
   date={2011},
   number={3-4},
   pages={777--790},
   issn={0025-5874},
   review={\MR{2818729}},
   doi={10.1007/s00209-010-0695-4},
}

\bib{RS1975}{book}{
   author={Reed, Michael},
   author={Simon, Barry},
   title={Methods of modern mathematical physics. II. Fourier analysis,
   self-adjointness},
   publisher={Academic Press [Harcourt Brace Jovanovich, Publishers], New
   York-London},
   date={1975},
   pages={xv+361},
   review={\MR{0493420}},
   }

  \bib{TXY}{article}{
  author={Tang, Zizhou},
   author={Xie, Yuquan},
   author={Yan, Wenjiao},
  title={Isoparametric foliation and Yau conjecture on the first eigenvalue, II},
  journal={Journal of Functional Analysis},
  volume={266},
  number={10},
  pages={6174--6199},
  year={2014},
  publisher={Elsevier}
}
\bib{VZ2018}{article}{
   author={Vieira, Matheus},
   author={Zhou, Detang},
   title={Geometric properties of self-shrinkers in cylinder shrinking Ricci
   solitons},
   journal={J. Geom. Anal.},
   volume={28},
   date={2018},
   number={1},
   pages={170--189},
   issn={1050-6926},
   review={\MR{3745854}},
   doi={10.1007/s12220-017-9815-2},
}

\bib{YAU}{book}{
 author={Yau, Shing-Tung},
 title={Seminar on differential geometry},
 series={Annals of Mathematics Studies},
 volume={102},
 date={1982},
 doi={10.1515/9781400881918},
 review={Zbl 0471.00020},
}
\end{biblist}
\end{bibdiv}

\end{document}